\documentclass[a4paper,11pt]{amsart}
\usepackage{amsmath,amsthm,amssymb}
\usepackage{graphicx,color}
\usepackage[all]{xy}
\usepackage{hyperref}
\usepackage[margin=2cm]{geometry}
\usepackage{tikz,enumerate}

\theoremstyle{plain}
\newtheorem{theorem}{Theorem}[section]
\newtheorem{proposition}[theorem]{Proposition}
\newtheorem{lemma}[theorem]{Lemma}

\theoremstyle{definition}

\newtheorem{question}[theorem]{Question}

\DeclareMathOperator{\sign}{sign}

\def\Z{\mathbb{Z}}
\def\Q{\mathbb{Q}}
\def\R{\mathbb{R}}

\def\cF{\cF}
\def\e{\mathbf{e}}

\def\cF{\mathcal{F}}

\def\cL{\mathcal{L}}

\newcommand{\BP}{\mathcal{B}^{odd}}
\newcommand{\CP}{\mathcal{C}^{odd}}
\newcommand{\ZZ}{\Z_2}

\tikzstyle{v}=[circle, draw, solid, fill=black!50, inner sep=0pt, minimum width=4pt]
\tikzstyle{v2}=[circle, draw, solid, fill=black, inner sep=0pt, minimum width=4pt]
\newcommand{\row}{\mathrm{Row}}

\def \K {K}
\def \Kd {K}
\begin{document}
\title[Real toric varieties of Types $C$ and $D$]{The cohomology groups of real toric varieties associated to Weyl chambers of type $C$ and $D$}

\author[S.~Choi]{Suyoung Choi}
\thanks{The first named author was supported by Basic Science Research Program through the National Research Foundation of Korea(NRF) funded by the Ministry of Science, ICT \& Future Planning(NRF-2016R1D1A1A09917654).}
\address[S.~Choi]{Department of mathematics, Ajou University, 206, World cup-ro, Yeongtong-gu, Suwon 16499,  Republic of Korea}
\email{schoi@ajou.ac.kr}

\author[S.~Kaji]{Shizuo KAJI}
\thanks{The second named author was partially supported by KAKENHI, Grant-in-Aid for Young
     Scientists (B) 26800043.}
\address[S.~Kaji]{Department of Mathematical Sciences, Faculty of Science, Yamaguchi University, 1677-1, Yoshida, Yamaguchi 753-8512, Japan / JST PRESTO}
\email{skaji@yamaguchi-u.ac.jp}


\author[H.~Park]{Hanchul Park}
\address[H.~Park]{School of Mathematics, Korea Institute for Advanced Study (KIAS), 85 Hoegiro Dongdaemun-gu, Seoul 02455, Republic of Korea}
\email{hpark@kias.re.kr}

\date{\today}
\subjclass[2010]{14M25, 57N65, 17B22}

\keywords{real toric variety, cohomology, root system, Weyl chamber, poset topology, generalized Euler number}

\maketitle%
\begin{abstract}
Given a root system, the Weyl chambers in the co-weight lattice give rise to a real toric variety, called the real toric variety associated to the Weyl chambers.
We compute the integral cohomology groups of real toric varieties associated to the Weyl chambers of type $C_n$ and $D_n$, completing the computation for all classical types.
\end{abstract}

\maketitle

\tableofcontents

\section{Introduction}

For a root system of rank $n$, its Weyl chambers with the co-weight lattice give rise to an $n$-dimensional non-singular complete fan.
By the fundamental theorem of toric geometry, the fan corresponds to a smooth compact toric variety,
which is called the \emph{toric variety associated to the Weyl chambers}.
The real locus of the toric variety forms a real variety called the \emph{real toric variety associated to the Weyl chambers}.
For each irreducible root system of type $R$, we denote the associated toric variety by $X_R$ and the real toric variety by $X^\R_R$.
In this paper, we are interested in the topology of $X^\R_R$.

In general, since a real toric variety $X^\R$ is the fixed point set of the involution in a toric variety $X$,
the $r$th mod $2$-cohomology group of $X^\R$ is isomorphic to the $2r$th mod~$2$-cohomology group of $X$,
which is completely determined by the number of cones of the corresponding fan.
Therefore, the mod~$2$-Betti numbers of $X^\R_R$ have been known as corollaries of study of $X_R$ such as \cite{Procesi1990,Stembridge1994,Dolgachev-Lunts1994} and \cite{Abe2015}.
However, not so much is known about their rational and integral cohomology groups.
Here, we review the known results in this direction.
For the classical types of $A_n$ and $B_n$, the $r$th $\Q$-Betti numbers of $X^\R_{A_n}$ and $X^\R_{B_n}$ are
computed in \cite{Henderson2012,CPP16}:
\begin{align*}
    \beta^r (X^\R_{A_n};\Q) &= \binom{n+1}{2r}a_{2r}, \quad \text{ and } \\
    \beta^r (X^\R_{B_n};\Q) &= \binom{n}{2r}b_{2r} + \binom{n}{2r-1}b_{2r-1},
\end{align*}
where $\displaystyle \sum_{n=0}^\infty a_n \frac{x^n}{n!} = \sec x + \tan x$ and $\displaystyle \sum_{n=0}^\infty b_n \frac{x^n}{n!} = \frac{1}{\cos x - \sin x}$. Remark that $a_n$ is known as the $n$th Euler zigzag number (see A000111 of \cite{oeis}), and $b_n$ is known as the $n$th generalized Euler number or Springer number (see A001586 of \cite{oeis}). See Table~\ref{table:ab_sequence}.
(We set $a_k = b_k = \binom{n}{k} = 0$ for a negative integer $k<0$ as a convention.)
\begin{table}[h]
  \centering
    \begin{tabular}{|c|ccccccccccc}
      \hline
      $n$ & 0 & 1 & 2 & 3 & 4 & 5 & 6 & 7 & 8 & 9 &$\cdots$ \\ \hline
      $a_n$ & 1 & 1 & 1 & 2 & 5 & 16 & 61 & 272 & 1385 & 7936& $\cdots$ \\
      $b_n$ & 1 & 1 & 3 & 11 & 57 & 361 & 2763 & 24611 & 250737 & 2873041& $\cdots$ \\
      \hline
    \end{tabular}
  \caption{The list of $a_n$ and $b_n$ for small $n$ } \label{table:ab_sequence}
\end{table}

For the exceptional types of $R=G_2$, $F_4$, and $E_6$, the non-zero $\Q$-Betti numbers of $X^\R_R$ have been computed in \cite{Cho-Choi-Kaji2017+}. See Table~\ref{table:exceptional_types}.
\begin{table}[h]
\centering
\begin{tabular}{|c|cccc|c|}
  \hline
  Types $R$ & $\beta^0(X^\R_R)$ & $\beta^1(X^\R_R)$  & $\beta^2(X^\R_R)$  & $\beta^3(X^\R_R)$   \\ \hline
  $G_2$ & 1 & 9 &  &   \\ \hline
  $F_4$ & 1 & 57 & 264 &  \\ \hline
  $E_6$ & 1 & 36 & 1323 & 4392   \\
  \hline
\end{tabular}
  \caption{The list of non-zero Betti numbers of $X^\R_{G_2}$, $X^\R_{F_4}$ and $X^\R_{E_6}$} \label{table:exceptional_types}
\end{table}

It should be noted that the real toric varieties in the above cases have only $2$-torsion in the cohomology groups. Hence, by the universal coefficient theorem, one can compute their integral cohomology groups.

In this paper, we compute the rational Betti number of real toric varieties associated to the Weyl chambers of classical type $C$ and $D$.
Furthermore, we show that they have only $2$-torsion in the cohomology group.
By the universal coefficient theorem, our results completely determine the integral cohomology groups of $X^\R_{C_n}$ and $X^\R_{D_n}$,
thus completing the computation of the integral cohomology groups of the
real toric varieties associated to the Weyl chambers of all classical types.


\begin{theorem}\label{thm:main}
Let $s_m = 2^m -1$ and $t_m = (m-2)2^{m-1}+1$.
The $r$th $\Q$-Betti numbers of $X^\R_{C_n}$ ($n \geq 3$) and $X^\R_{D_n}$ ($n \geq 4$)  are as follows.
\begin{align*}
    \beta^r (X^\R_{C_n}; \Q) =& \binom{n}{2r-2}2^{2r-2}s_{n-2r+2}a_{2r-2} + \binom{n}{2r}(2b_{2r} - 2^{2r}a_{2r}), \quad \text{ and }\\
    \beta^r (X^\R_{D_n}; \Q) =& \binom{n}{2r-4}2^{2r-4}t_{n-2r+4}a_{2r-4} + \binom{n}{2r}(2b_{2r} - 2^{2r}a_{2r}),
\end{align*}
where
$a_k = b_k = \binom{n}{k} = 0$ for a negative integer $k<0$.
Furthermore, $X^\R_{C_n}$ and $X^\R_{D_n}$ have only $2$-torsion in the cohomology groups.
\end{theorem}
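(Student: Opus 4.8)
The plan is to reduce both assertions to the reduced integral (co)homology of a family of full subcomplexes of the Coxeter complex, following the topological decomposition used for types $A$ and $B$ in \cite{CPP16}. Recall that $X^\R_{C_n}$ and $X^\R_{D_n}$ are smooth real toric varieties whose defining fan is the Coxeter fan; its underlying simplicial complex $K$ is the Coxeter complex (a triangulation of $S^{n-1}$) on some vertex set $[m]$, and the mod~$2$ reductions of the primitive ray generators assemble into a characteristic matrix $\Lambda$. The starting point is the decomposition formula
\[
H^{r}(X^\R;\Z)\;\cong\;\bigoplus_{\omega\in\row(\Lambda)}\tilde H^{\,r-1}(K_\omega;\Z),
\]
where $\row(\Lambda)\subseteq\ZZ^m$ is the row space of $\Lambda$ and $K_\omega$ is the full subcomplex of $K$ on the support of $\omega$. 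This single formula yields both halves of the theorem at once: rationally $\beta^r(X^\R;\Q)=\sum_{\omega}\tilde\beta^{\,r-1}(K_\omega;\Q)$, while the integral torsion of $X^\R$ is the direct sum of the torsion subgroups of the $\tilde H^*(K_\omega;\Z)$. In particular, the claim that $X^\R_{C_n}$ and $X^\R_{D_n}$ carry only $2$-torsion is \emph{equivalent} to the claim that each $\tilde H^*(K_\omega;\Z)$ carries only $2$-torsion.

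First I would set up the combinatorics explicitly, indexing the rays of the Coxeter fan by cosets of maximal parabolic subgroups of the Weyl group, writing $\Lambda$ in these terms, and determining which vectors $\omega$ occur in $\row(\Lambda)$ together with their supports. Sorting the $\omega$'s into finitely many combinatorial types, governed by the sign and coordinate data distinguishing $C_n$ from $D_n$, is what should produce the binomial and power-of-two prefactors $\binom{n}{2r-2}2^{2r-2}$, $\binom{n}{2r}$, and $\binom{n}{2r-4}2^{2r-4}$ recorded in the statement.

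The core of the argument is then to compute, for each $\omega$, the integral homology of $K_\omega$. I would realize $K_\omega$ as the order complex of an explicit induced subposet of the poset of parabolic cosets and apply poset-topology tools — shellability, discrete Morse theory, or Quillen-type fiber lemmas — to determine it. I expect each $\tilde H^*(K_\omega;\Z)$ to be free except for $2$-primary summands, the latter coming from the antipodal-type identifications intrinsic to the type $C$ and $D$ sign changes (so that the relevant pieces are spheres and mod~$2$ Moore spaces); this is exactly what establishes the ``only $2$-torsion'' conclusion through the displayed decomposition. The surviving Betti ranks are then a signed-permutation count: the type $A$ strata are tallied by alternating permutations, reproducing the Euler zigzag numbers $a_k$ via the exponential generating function $\sec x+\tan x$, while the type $C$/$D$ strata are tallied by the corresponding snakes, reproducing the Springer numbers $b_k$ via $1/(\cos x-\sin x)$. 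Summing over all $\omega$ of each type and collapsing the totals into closed form is where the sequences $s_m=2^m-1$ and $t_m=(m-2)2^{m-1}+1$, and the combinations $2b_{2r}-2^{2r}a_{2r}$, should emerge (with the low-rank cases excluded by the hypotheses $n\geq 3$, $n\geq 4$ handled by direct inspection).

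The main obstacle is precisely this middle step: controlling the homotopy type of every $K_\omega$ uniformly in $n$ and $r$, isolating exactly which $\omega$ contribute $2$-torsion, and verifying that no odd torsion ever appears. Equivalently, one must show that the integral boundary maps in the simplicial chains of each $K_\omega$ have elementary divisors that are powers of $2$; a clean shelling or Morse matching that simultaneously computes the Betti numbers and bounds the torsion would settle both conclusions of the theorem together, after which the universal coefficient theorem converts the results into the stated integral cohomology groups.
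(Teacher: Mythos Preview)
Your overall strategy---apply the cohomology splitting formula, classify the elements of $\row(\Lambda)$ into finitely many combinatorial types, and compute the homotopy type of each $K_\omega$ via poset topology (Quillen-type lemmas, etc.)---is exactly the route the paper takes. But two of your stated inputs are wrong, and one of them undermines the torsion argument as you have written it.

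First, the integral splitting $H^r(X^\R;\Z)\cong\bigoplus_\omega \tilde H^{\,r-1}(K_\omega;\Z)$ that you invoke does not hold in general and is not what Theorem~\ref{thm:cohomology-of-real-toric} asserts. The available formulas are with $\Q$-, $\Z_{2k+1}$-, and $\Z_{2^{k+1}}$-coefficients (the last with a built-in coefficient shift $\Z_{2^{k+1}}\mapsto\Z_{2^k}$). In particular, your sentence ``the integral torsion of $X^\R$ is the direct sum of the torsion subgroups of the $\tilde H^*(K_\omega;\Z)$'' is false: $X^\R$ can (and here does) carry $2$-torsion even when every $K_\omega$ is torsion-free. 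The correct deduction of ``only $2$-torsion'' is: once you show each $\tilde H^*(K_\omega;\Z)$ is free, the universal coefficient theorem gives $\dim_{\Z_p}\tilde H^*(K_\omega;\Z_p)=\dim_\Q\tilde H^*(K_\omega;\Q)$ for every odd prime $p$, and then the $\Z_{2k+1}$-formula in Theorem~\ref{thm:cohomology-of-real-toric} forces the $\Z_p$- and $\Q$-Betti numbers of $X^\R$ to agree, so there is no odd torsion.

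Second, your expectation that the $K_\omega$ will be ``spheres and mod~$2$ Moore spaces'' is off. In the paper's computation every $(K_{C_n})_S$ and $(K_{D_n})_S$ has \emph{free} integral cohomology, concentrated in at most two degrees; no Moore spaces appear. Concretely, after reducing to a representative $S=\{1,\dots,2r-1\}$ (your symmetry reduction), one applies Quillen's Theorem~A to pass to a smaller poset, identifies the ``bottom'' piece with the odd rank-selected cross-polytope lattice $\CP_{2r}$ and the local pieces with joins $\BP_{2r}\star(\text{finite set or graph})$, and then a Mayer--Vietoris induction reads off the free ranks $b_{2r}-2^{2r-1}a_{2r}$ and $(2^{n-2r}-1)2^{2r-1}a_{2r}$ (type $C$), respectively $b_{2r}-2^{2r-1}a_{2r}$ and $2^{2r-1}t_{n-2r}a_{2r}$ (type $D$). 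The factors $s_m$ and $t_m$ arise not from a global summation over $\omega$ but as the reduced Betti numbers of the ``upper'' link $\K_{>J}$ (a discrete set in type $C$, a graph in type $D$). So your plan is on the right track, but you should drop the integral splitting and the Moore-space heuristic and instead aim directly at freeness of $\tilde H^*(K_\omega;\Z)$.
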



Tables~\ref{table:type_c} and \ref{table:type_d} are the lists of non-zero $\Q$-Betti numbers of $X^\R_{C_n}$ and $X^\R_{D_n}$ for $n\leq 11$.
The Euler characteristic of $X$ is denote by $\chi(X)$.

\begin{table}[h]
  \centering
\begin{tabular}{|c|ccccccc|c|}
  \hline
  $n$ & $\beta^0(X^\R_{C_n})$ & $\beta^1(X^\R_{C_n})$ & $\beta^2(X^\R_{C_n})$ & $\beta^3(X^\R_{C_n})$ & $\beta^4(X^\R_{C_n})$ & $\beta^5(X^\R_{C_n})$ & $\beta^6(X^\R_{C_n})$ & $\chi(X^\R_{C_n})$ \\ \hline
  3 & 1 & 13 & 12 &  &  &  &    & 0\\
  4 & 1 & 27 & 106 &  &  &  &   &80\\
  5 & 1 & 51 & 450 & 400 &  &   &  &0 \\
  6 & 1 & 93 & 1410 & 5222 &    &  &  &-3904\\
  7 & 1 & 169 & 3794 & 30954 & 27328 &    &  &0\\
  8 & 1 & 311 & 9436 & 129416 & 474850 &   &  &354560\\
  9 & 1 & 583 & 22572 & 448728 & 3617778 & 3191040   &  &0\\
  10 & 1 &1113& 53040& 1399020& 18908730& 69295142  &  &-51733504\\
  11 & 1 &2157& 123640& 4102164& 80153898& 645241762& 569068544& 0\\

  \hline
\end{tabular}
  \caption{The list of non-zero Betti numbers of $X^\R_{C_n}$ for $n \leq 11$ } \label{table:type_c}
\end{table}

\begin{table}[h]
  \centering
\begin{tabular}{|c|ccccccc|c|}
  \hline
  $n$ & $\beta^0(X^\R_{D_n})$ & $\beta^1(X^\R_{D_n})$ & $\beta^2(X^\R_{D_n})$ & $\beta^3(X^\R_{D_n})$ & $\beta^4(X^\R_{D_n})$ & $\beta^5(X^\R_{D_n})$ & $\beta^6(X^\R_{D_n})$ & $\chi(X^\R_{D_n})$ \\ \hline
  4 & 1 & 12 & 51 & 24 &  &  &   &16\\
  5 & 1 & 20 & 219 & 200 &  &   &  &0 \\
  6 & 1 & 30 & 639 & 2642 & 1200   &  &  &-832\\
  7 & 1 & 42 & 1511 & 15470 & 14000 &    &  &0\\
  8 & 1 & 56 & 3149 & 59864 & 242114 & 109312  &  &76032\\
  9 & 1 & 72 & 6077 & 182472 & 1816146 & 1639680   &  &0\\
  10 & 1 &90& 11237& 479040& 8778330& 35366822& 15955200 &-11101184\\
  11 & 1 &110& 20437& 1143824& 32715210& 324103714& 292512000& 0\\

  \hline
\end{tabular}
  \caption{The list of non-zero Betti numbers of $X^\R_{D_n}$ for $n \leq 11$ } \label{table:type_d}
\end{table}

Only two exceptional cases remain to be open.
\begin{question}
  Compute the integral cohomology groups of $X^\R_{E_7}$ and $X^\R_{E_8}$.
\end{question}

\section{Real toric varieties associated to Weyl chambers}\label{sec:toric-root}
Let $X$ be a smooth compact toric variety and $X^\R$ its real toric variety.
By the fundamental theorem of toric geometry, $X$ is assigned by a non-singular complete fan $\Sigma$.
Assume that $\Sigma$ has $m$ rays, and we put $[m]=\{1,2,\ldots,m\}$ the set of rays of $\Sigma$.
We obtain a (star-shaped) simplicial complex $K$ on $[m]$, whose
simplices correspond to the sets of rays spanning cones of $\Sigma$.
 In addition, the primitive integral vectors of the rays define a map $\lambda$ from $[m]$ to $\Z^n$;
 $\lambda(v)$ is the direction of the ray $v \in [m]$. Note that $\Sigma$ is completely determined by a pair of $K$ and $\lambda$, and, therefore, so is $X$.

Similarly, the real toric variety $X^\R$ is also determined by a pair of $K$ and the composition $\lambda^\R \colon [m] \stackrel{\lambda}{\to} \Z^n \stackrel{\text{mod $2$}}{\longrightarrow} \ZZ^n$, where $\ZZ$ is the field with two elements.
 Throughout the paper, we identify the power set $2^{[m]}$ of $[m]$ with $\ZZ^m$ in the following way: for $S \subset [m]$, the corresponding element $S \in\ZZ^m$ is $\sum_{i\in S} \e_i$, where $\e_i$ is the $i$th standard vector of $\ZZ^m$.
Then, we may regard $\lambda^\R$ as the linear map $\Lambda \colon \ZZ^m \to \ZZ^n$, which is called the \emph{characteristic matrix} of $X^\R$.
We make note of the fact that $X^\R$ does not depend on the choice of a basis of $\ZZ^n$.
Therefore, topological invariants of $X^\R$ should be computed in terms of $K$ and $\Lambda$.
Our method in this paper is based on the following cohomology formulae of real toric varieties.

\begin{theorem}[\cite{ST2012,CP2, Cai-Choi_even_torsion}]\label{thm:cohomology-of-real-toric}
 Let $k$ be a positive integer. Then, as graded modules,
  \begin{align*}
  H^\ast(X^\R;\Q) \cong& \bigoplus_{ S \in \row( \Lambda )} \widetilde{H}^{\ast-1} (K_S; \Q), \\
H^\ast(X^\R;\Z_{2k+1}) \cong& \bigoplus_{ S \in \row( \Lambda )} \widetilde{H}^{\ast-1} (K_S; \Z_{2k+1}), \quad \text{ and } \\
H^\ast(X^\R;\Z_{2^{k+1}}) \cong& \bigoplus_{ S \in \row( \Lambda )} \widetilde{H}^{\ast-1} (K_S; \Z_{2^{k}}),
  \end{align*}
where $\row( \Lambda )$ is the subspace of $\ZZ^m$ spanned by the rows of $\Lambda$, and
$K_S$ is the induced subcomplex of $K$ on the vertices indicated by $S \in 2^{[m]}=\ZZ^m$.
\end{theorem}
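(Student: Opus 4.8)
The plan is to present $X^\R$ as a quotient of the real moment--angle complex and to compute its cohomology by splitting an appropriate (co)chain complex along the weight grading induced by the ambient torus action. Recall that the pair $(K,\Lambda)$ determines the real moment--angle complex $\RZ_K\subseteq (D^1)^m$, on which $\ZZ^m$ acts by coordinatewise reflections, and that the real toric variety is the quotient $X^\R=\RZ_K/H$ with $H=\ker(\Lambda\colon\ZZ^m\to\ZZ^n)$. Because $\Lambda$ is a characteristic matrix, the elementary abelian $2$--group $H\cong\ZZ^{m-n}$ acts \emph{freely}, so $\RZ_K\to X^\R$ is a regular covering with deck group $H$. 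The first ingredient I would invoke is the Hochster/Baskakov--type decomposition
\begin{equation*}
\widetilde H^\ast(\RZ_K;R)\cong\bigoplus_{S\subseteq[m]}\widetilde H^{\ast-1}(K_S;R),
\end{equation*}
valid over any coefficient ring $R$ and \emph{natural} for the $\ZZ^m$--action: under the identification of characters $\mathrm{Hom}(\ZZ^m,\{\pm1\})\cong\ZZ^m$, $\ g\mapsto(-1)^{\langle g,S\rangle}$, the summand indexed by $S$ is exactly the $S$--isotypic component, on which $g\in\ZZ^m$ acts as $(-1)^{\langle g,S\rangle}$.

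For the rational and odd cases, where $\lvert H\rvert=2^{m-n}$ is invertible in $R\in\{\Q,\Z_{2k+1}\}$, I would finish by transfer: for a free action of a finite group of order invertible in $R$ one has $H^\ast(\RZ_K/H;R)\cong H^\ast(\RZ_K;R)^H$. Passing to $H$--invariants in the weight decomposition annihilates every summand on which some $g\in H$ acts by $-1$ and retains precisely the $S$ with $\langle g,S\rangle=0$ for all $g\in H$, that is $S\in H^\perp$. A rank--nullity count over $\ZZ$ identifies $H^\perp=(\ker\Lambda)^\perp$ with the row space $\row(\Lambda)$, which yields the first two displayed isomorphisms.

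The even torsion case $R=\Z_{2^{k+1}}$ is the main obstacle, since transfer is unavailable and the summands with $S\notin\row(\Lambda)$ do not simply vanish. Here I would descend to the cochain level and model $H^\ast(X^\R;R)$ by the $H$--equivariant cochains on $\RZ_K$, resolving $R$ over the group ring $R[H]$ by the standard periodic free resolution whose differentials, on each cyclic factor $\ZZ$ with generator $t$, alternate between $t-1$ and $t+1$. The resulting multicomplex again splits along the weight grading $S\in\ZZ^m$; inside the weight--$S$ block the differentials coming from $H$ act, up to sign, as multiplication by $0$ or by $2$ according to the parity of $\langle g,S\rangle$. The crux is to run the ensuing spectral sequence and show that over $\Z_{2^{k+1}}$ each surviving class $S\in\row(\Lambda)$ contributes exactly $\widetilde H^{\ast-1}(K_S;\Z_{2^k})$: the one--step loss of precision from $2^{k+1}$ to $2^k$ is precisely the effect of the kernel and cokernel of multiplication by $2$ on $\Z_{2^{k+1}}$, i.e.\ of the Bockstein linking adjacent weight components. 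Checking that every potential cross--term between distinct weights either cancels or reassembles into exactly this coefficient shift, and that no higher differentials persist, is the technical heart; this requires the explicit combinatorial form of the cellular boundary maps of $\RZ_K$ and is where I expect the real work to lie, following the even--torsion analysis of the cited references.
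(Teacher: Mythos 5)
The paper itself offers no proof of this theorem --- it is imported verbatim from \cite{ST2012,CP2,Cai-Choi_even_torsion} --- so your attempt can only be measured against those sources. Your treatment of the first two isomorphisms is correct and is essentially the argument of \cite{ST2012,CP2}: $X^\R\cong \RZ_K/H$ with $H=\ker(\Lambda\colon\ZZ^m\to\ZZ^n)$ acting freely, the weight decomposition $\widetilde H^\ast(\RZ_K;R)\cong\bigoplus_S\widetilde H^{\ast-1}(K_S;R)$, transfer to pass to $H$-invariants, and $(\ker\Lambda)^\perp=\row(\Lambda)$. Two small caveats: the identification of the weight-$S$ summand with the isotypic component on which $g$ acts by the scalar $(-1)^{\langle g,S\rangle}$ is only valid once $2$ is invertible in $R$ (the diagonalizing change of basis in each $D^1$-factor has determinant $\pm 2$), which is fine for $\Q$ and $\Z_{2k+1}$ but silently breaks in the even case; and the rank--nullity argument identifying $H^\perp$ with $\row(\Lambda)$ takes place over $\ZZ$, not over $\Z$.

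The genuine gap is the third isomorphism, which is the only nonstandard part of the theorem and which you leave as a program rather than a proof (``run the ensuing spectral sequence and show\dots'', ``is where I expect the real work to lie''). Moreover, the program as described cannot succeed, because no weight-by-weight spectral-sequence argument valid for arbitrary $(K,\Lambda)$ can prove the displayed $\Z_{2^{k+1}}$ formula: read literally, that formula fails in the simplest examples. Take $X^\R=\R P^2$, i.e.\ $K=\partial\Delta^2$ on vertices $\{1,2,3\}$ and $\Lambda=(\e_1\ \e_2\ \e_1+\e_2)$; then $\row(\Lambda)=\{\emptyset,\{1,3\},\{2,3\},\{1,2\}\}$, every nonempty $K_S$ is a contractible edge, so the right-hand side is concentrated in degree $0$, whereas $H^1(\R P^2;\Z_4)\cong H^2(\R P^2;\Z_4)\cong\Z_2$. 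More generally the right-hand side is a $\Z_{2^k}$-module, hence of exponent dividing $2^k$, while $H^0(X^\R;\Z_{2^{k+1}})\cong\Z_{2^{k+1}}$, and any free class in positive-degree integral cohomology (which the present paper's $X^\R_{C_n}$, $X^\R_{D_n}$ have in abundance) likewise forces $\Z_{2^{k+1}}$-summands on the left. So the statement must be read together with the hypotheses and precise formulation of the even-torsion source \cite{Cai-Choi_even_torsion}, which works under a shellability assumption on $K$ (satisfied by the Coxeter complexes used in this paper) and extracts the conclusion from a shelling-adapted cell structure, not from a formal weight splitting. This is exactly the ingredient your sketch lacks: your key claim that ``every potential cross-term between distinct weights either cancels or reassembles into exactly this coefficient shift'' is false in general --- the weights outside $\row(\Lambda)$ interact with those inside via nontrivial Bockstein-type differentials, as the $\R P^2$ example shows --- so the even case of your proof cannot be completed as sketched without importing that additional combinatorial input.
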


In the rest of this section, we review the construction of the non-singular complete fan associated to Weyl chambers (see, for example, \cite{Stembridge1994}).
 Let $V$ be a finite dimensional real Euclidean space, and $\Phi_R \subset V$ an irreducible root system of type $R$. Take a fixed set of simple roots $\Delta_R=\{\alpha_1,\ldots, \alpha_n\}\subset \Phi_R$. Let $\Omega=\{\omega_1,\ldots,\omega_n\}$ be the set of fundamental co-weights, that is, $(\omega_i,\alpha_j)=\delta_{ij}$ with respect to the inner product in the ambient space $V$.
 Then, we can assume $V=\R\langle \Omega \rangle$.
Consider the co-weight lattice $\Z \langle \Omega \rangle = \{ v \in V \mid (v,\alpha) \in \Z \text{ for any } \alpha \in \Phi_R \}$.
Denote by $W_R=\langle s_1,\ldots, s_n \rangle$ the Weyl group generated by the simple reflections acting on $V$. Then, $W_R$ decomposes $V$ into several chambers,
giving a non-singular complete fan in $\R\langle \Omega \rangle$ with the lattice structure $\Z\langle \Omega \rangle \subset \R\langle \Omega \rangle$.
Let $V_R=W_R\cdot \Omega=\{v_1,\ldots,v_m\}$ be the set of rays spanning the chambers, and $K_R\subset 2^{V_R}$ the corresponding simplicial complex called the \emph{Coxeter complex} of type $R$ \cite[\S 1.15]{Humphreys1990}, whose maximal simplices are the chambers.
Then the corresponding characteristic matrix is obtained as $\Lambda_R=(v_1,v_2,\ldots,v_m)$, where the columns are the mod 2 coordinates of the rays
with respect to any basis of $\Z\langle \Omega \rangle$.
Observe that the maximal cones in the fan correspond to sets of simple roots;  for each set of simple roots $\Delta$ in $\Phi_R$ the corresponding cone is $C_\Delta = \{ v \in V \mid (v, \alpha) >0 \text{ for any $\alpha \in \Delta$}\}$. This observation is useful to compute $K_R$ and $\Lambda_R$ explicitly.

We fix some notations which are used later;
      \begin{itemize}
        \item $[n]=\{1, 2, \ldots, n\}$ and $[\pm n] = \{ \pm 1, \pm 2, \ldots, \pm n\}$.
        \item For $I \subset [\pm n]$, $I^+=\{ i\in[n] \mid i\in I \}$.
        \item For $I \subset [\pm n]$, $I^-=\{ i\in[n] \mid -i\in I \}$.
        \item For $I \subset [\pm n]$, $I^{\pm}=I^+\cup I^-$.
      \end{itemize}

We also recall the following facts for the proof of the main theorem.

\begin{lemma}[Section~3 of \cite{Wachs2007}] \label{lem:posets}
The cohomology groups of $\CP_{2r}$ and $\BP_{2r}$ are
\[
\widetilde{H}^*(\CP_{2r};\Z)\cong
\begin{cases} \Z^{b_n} & (*=r-1) \\ 0 & (*\neq r-1),
\end{cases}
\qquad
\widetilde{H}^*(\BP_{2r};\Z)\cong
\begin{cases} \Z^{a_n} & (*=r-1) \\ 0 & (*\neq r-1),
\end{cases}
\]
where
$\CP_{2r}$ is the poset complex of the odd rank-selected lattice of faces of the cross-polytope over $2r$ points,
and $\BP_{2r}$  is the poset complex of the odd rank-selected Boolean algebra on $2r$ points.
\end{lemma}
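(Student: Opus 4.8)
The plan is to place both computations inside the standard machinery of rank-selection for lexicographically shellable posets, so that the homology is automatically free and concentrated in a single degree, and the rank reduces to a descent count over (signed) permutations. Throughout I write $S=\{1,3,\dots,2r-1\}$ for the set of odd ranks of the proper part; note that here the ground set has $2r$ points (so $n=2r$ in the notation of the statement, and the claimed ranks are $a_{2r}$ and $b_{2r}$). The first step is to observe that both underlying posets are graded lattices that are \emph{EL-shellable}: the Boolean algebra $B_{2r}$ carries the standard edge labeling $\lambda(x\lessdot x\cup\{j\})=j$, and the face lattice of the $2r$-dimensional cross-polytope admits an explicit signed edge labeling of the same flavor. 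EL-shellability makes both lattices Cohen--Macaulay over $\Z$, and this property is inherited by every rank-selected subposet. Hence the order complexes $\BP_{2r}$ and $\CP_{2r}$ (the order complexes of the odd rank-selected Boolean algebra and cross-polytope face lattice) are homotopy equivalent to wedges of spheres; in particular their reduced cohomology is free abelian with no torsion, concentrated in one degree. Since $|S|=r$, a maximal chain of the rank-selected poset has exactly $r$ elements, so the order complexes are pure of dimension $r-1$ and the nonzero group sits in degree $r-1$, as asserted.

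Next I would identify the ranks via Stanley's rank-selection theorem for EL-shellable posets: the rank of $\widetilde{H}_{|S|-1}(\Delta(P_S))$ equals $\beta_P(S)$, the number of maximal chains of $P$ whose descent set (with respect to the EL-labeling) is exactly $S$. For the Boolean algebra the standard labeling identifies maximal chains $\emptyset\lessdot\cdots\lessdot[2r]$ with permutations of $[2r]$, carrying the chain descent set to the ordinary descent set of the permutation. Thus $\beta_{B_{2r}}(S)$ counts the permutations of $[2r]$ whose descents occur exactly at the odd positions, i.e. the down-up alternating permutations; by André's theorem these are enumerated by the Euler zigzag number $a_{2r}$ (the secant coefficient, since $2r$ is even). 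This yields $\widetilde{H}^{r-1}(\BP_{2r};\Z)\cong\Z^{a_{2r}}$.

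For the cross-polytope I would run the parallel argument with the signed labeling: maximal chains of the face lattice correspond to the elements of the hyperoctahedral group of rank $2r$, and the chain descent statistic corresponds to the type-$B$ descent set. The maximal chains with descent set exactly the odd ranks $S$ are then the type-$B$ alternating permutations (the \emph{snakes}), whose number is the Springer, or generalized Euler, number $b_{2r}$ encoded by the exponential generating function $1/(\cos x-\sin x)$. This gives $\widetilde{H}^{r-1}(\CP_{2r};\Z)\cong\Z^{b_{2r}}$, and freeness over $\Z$ again comes for free from shellability.

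The main obstacle is the type-$B$ bookkeeping. The type-$A$ (Boolean) case is essentially classical André theory, so the real work is in the signed setting: one must fix an explicit EL-labeling of the cross-polytope face lattice, verify that its descent statistic on maximal chains coincides exactly with the snake condition, and confirm that the resulting enumeration is precisely $b_{2r}$. A secondary point requiring care is the rank/indexing convention, ensuring that the $2r$ proper ranks and the choice of the odd ones produce the homological degree $r-1$ rather than an off-by-one shift. Once the labeling and the descent-to-snake dictionary are in place, the count follows by transporting André's run-theoretic (or generating-function) argument to the hyperoctahedral group.
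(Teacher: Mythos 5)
Your proposal is correct and takes essentially the same route as the paper, which gives no independent proof but cites Section~3 of Wachs's lecture notes, where the result is established exactly as you describe: EL-shellability of $B_{2r}$ and of the face lattice of the $2r$-dimensional cross-polytope, inheritance of Cohen--Macaulayness under rank selection (giving free cohomology concentrated in degree $|S|-1=r-1$), and Stanley's identification of the Betti number with the number of maximal chains of descent set exactly $S$, which for the odd ranks yields the alternating permutations counted by $a_{2r}$ in type $A$ and the type-$B$ snakes counted by the Springer number $b_{2r}$. You also correctly handle the two bookkeeping pitfalls (the statement's $a_n,b_n$ should read $a_{2r},b_{2r}$, and $|S|=r$ gives homological degree $r-1$), and the type-$B$ labeling you defer to is standard and carried out in the cited source.
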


\section{Type $C_n$}
 In this section, since $\Phi_{C_n} = \Phi_{B_n}$ for $n\leq 2$, we assume that $n \geq 3$.
    The root system $\Phi_{C_n}$ of type $C_n$ consists of $2n^2$ roots
    $$
        \pm 2\varepsilon_i ~(1\leq i\leq n)   \quad \text{ and }  \quad \pm\varepsilon_i \pm \varepsilon_j ~(1\leq i<j \leq n),
    $$ where $\varepsilon_i$ is the $i$th standard vector of $\R^n = V$.
    Then, the co-weight lattice $\Z \langle \Omega \rangle$ is
    $$
        \Z \langle \Omega \rangle = \left\{ \frac{1}{2} \left( \ell_1 \varepsilon_1 + \cdots + \ell_n \varepsilon_n \right) ~\mid ~ \ell_i \in \Z, \text{ and } \ell_i \equiv \ell_j \text{ (mod $2$) for all $i,j$ } \right\}.
    $$
    Choose a basis $\{\epsilon_i\mid 1\le i \le n\}$ of $\Z \langle \Omega \rangle$
    defined by $\epsilon_i := \varepsilon_i$ for $i=1, \ldots, n-1$ and $\epsilon_n:= \frac{1}{2} \left( \varepsilon_1 + \cdots + \varepsilon_n \right)$.

    Any set of simple roots of type $C_n$ is written as
    $$
         \Delta = \{\mu_1 \varepsilon_{\sigma(1)} - \mu_2\varepsilon_{\sigma(2)}, \mu_2 \varepsilon_{\sigma(2)} - \mu_3\varepsilon_{\sigma(3)}, \ldots, \mu_{n-1} \varepsilon_{\sigma(n-1)}- \mu_n\varepsilon_{\sigma(n)}, 2 \mu_n\varepsilon_{\sigma(n)} \},
    $$ where $\mu_j = \pm1$ and $\sigma \colon [n] \to [n]$ is a permutation.
    Note that every line containing a ray in $V_{C_n}$ is the intersection of $n-1$ hyperplanes normal to $\Delta \setminus \{ \alpha \}$ for $\alpha \in \Delta$,
    and the direction of the ray is determined by $\alpha$.
    For $\alpha \in \Delta$, there exists a unique primitive integral vector $\beta_{\alpha,\Delta} \in \Z\langle \Omega \rangle$ such that $(\beta_{\alpha,\Delta}, \alpha') = 0$ for all $\alpha' \in \Delta \setminus \{\alpha\}$ and $(\beta_{\alpha,\Delta}, \alpha)>0$.
    More precisely, $\displaystyle \beta_{\alpha,\Delta} = \sum_{k=1}^{i} \mu_k \varepsilon_{\sigma(k)}$ for $\alpha = \alpha^\sigma_i$,
    where $\alpha^{\mu,\sigma}_i := \mu_i \varepsilon_{\sigma(i)} - \mu_{i+1}\varepsilon_{\sigma(i+1)}$ for $i=1, \ldots, n-1$ and
    $\alpha^{\mu,\sigma_n}:= 2 \mu_n\varepsilon_{\sigma(n)}$.
    We label the ray corresponding to $\beta_{\alpha,\Delta}$ by the subset $I = \{ \mu_1\sigma(1), \ldots, \mu_i \sigma(i) \}$ of $[\pm n]$.
    We note that the label subset $I$ satisfies
    \begin{equation}\label{eqn:I+capI-}
        I^+ \cap I^- = \emptyset.
    \end{equation}
    Conversely, for each $I \subset [\pm n]$ satisfying \eqref{eqn:I+capI-}, one can find $\alpha$ and $\Delta$ such that $\alpha \in \Delta$ and $\displaystyle \beta_{\alpha,\Delta} = \beta_I :=\sum_{ k \in I } \sign(k) \varepsilon_{|k|}$.
    Therefore, we have an identification of $V_{C_n}$ as
    $$
        V_{C_n}:=\{ I \subset [\pm n] \mid I^+ \cap I^- = \emptyset \}.
    $$
    Under this identification, a maximal cone $C_\Delta$ corresponds to a sequence of nested sets $I_1 \subsetneq \cdots \subsetneq I_{n-1} \subsetneq I_n$.

    Now, let us consider the characteristic map $\lambda_{C_n}$.
    For a vertex $I \in V_{C_n}$, $\lambda_{C_n}(I)$ is the primitive vector in $\Z\langle \Omega \rangle$ in the direction of $\beta_I$.
    If neither $n$ nor $-n$ is contained in $I$,
    $$\beta_I = \sum_{ k \in I^+ } \varepsilon_{k} - \sum_{k \in I^-} \varepsilon_{k}= \sum_{ k \in I^+ } \epsilon_{k} - \sum_{k \in I^-} \epsilon_{k}.$$
Otherwise, 
$$
    \beta_I = \left\{
                \begin{array}{ll}
                  {\displaystyle  \left( \sum_{ k \in I^+ \setminus\{n\} } \epsilon_{k} - \sum_{k \in I^-} \epsilon_{k}\right) + 2\epsilon_{n} - \epsilon_1 - \cdots - \epsilon_{n-1}}
, & \hbox{if $n \in I$;} \\
                  {\displaystyle \left( \sum_{ k \in I^+ } \epsilon_{k} - \sum_{k \in I^- \setminus\{n\} } \epsilon_{k} \right) - 2\epsilon_{n} + \epsilon_1 + \cdots + \epsilon_{n-1}}
, & \hbox{if $-n \in I$.}
                \end{array}
              \right.
$$
Since $\beta_I$ is primitive in $\Z\langle \Omega \rangle$ if and only if $|I| \leq n-1$, we see
    $$
        \lambda_{C_n}(I) = \left\{
                             \begin{array}{ll}
                  \beta_I, & \hbox{if $|I| \leq n-1$;} \\
                  \frac{1}{2} \beta_I, & \hbox{if $|I|=n$.}
                             \end{array}
                           \right.
$$
It follows
    \begin{equation}\label{eq:char}
        \lambda^\R_{C_n}(I) = \left\{
                             \begin{array}{ll}
                  {\displaystyle \sum_{k \in I^\pm} \e_k}, & \hbox{if $|I| \leq n-1$ and $n \not\in I^\pm$;} \\
                  {\displaystyle \sum_{k \not\in I^\pm} \e_k}, & \hbox{if $|I| \leq n-1$ and $n \in I^\pm$;} \\
                  {\displaystyle \e_n + \sum_{k \in I^-} \e_k} , & \hbox{if $|I|=n$ and $n \in I$;}\\
                  {\displaystyle \e_n + \sum_{k \in I^+} \e_k} , & \hbox{if $|I|=n$ and $-n \in I$,}
                             \end{array}
                           \right.
    \end{equation}
     where $\e_i$ is the $i$th standard vector of $\ZZ^n$. Let $\Lambda_{C_n}$ be the characteristic matrix of $X^\R_{C_n}$ determined by $\lambda^\R_{C_n}$, that is, $\Lambda_{C_n}$ is an $n \times (3^n-1)$ matrix with elements in $\ZZ$ whose columns are indexed by $I \in V_{C_n}$. Its column indexed by $I$ is denoted by $\lambda^\R_{C_n}(I)$.

    In order to apply Theorem~\ref{thm:cohomology-of-real-toric} to compute the cohomology of $X^\R_{C_n}$,
    we consider $(K_{C_n})_S$ for all $S \in \row(\Lambda_{C_n})$.
    For this, it is convenient to
    identify $\row(\Lambda_{C_n})$ with the power set of $[n]$ in the following way;
    we associate to $\{i_1,\ldots,i_{r}\}\subset [n]$
	the sum of $i_1,\ldots,i_{r}$th rows of $\Lambda_{C_n}$.
        We note the rows of $\Lambda_{C_n}$ are symmetric except for  the $n$th row,
    hence we deal with the case for $n\in S$ and $n \not\in S$ separately.

    For each $S \subset [n]$, the vertex set $V(S)$ of $(K_{C_n})_S$ is separated into two parts
    $V_1(S) = \{ I\in V(S) \mid |I|<n\}$ and $V_2(S) = \{ I\in V(S) \mid |I|=n\}$.
    More precisely,
    $$V_1(S) = \left\{ I\in V_{C_n} \mid
  | I^\pm \cap (S \cup \{n\})| \text{ is odd} \text{ and } |I|<n \right\},$$
    and $V_2(S)$ is as follows.
    \begin{itemize}
      \item When $|S|$ is odd and $n \not\in S$,
$
    V_2(S) = \left\{ I\in V_{C_n} \mid |I^- \cap (S\cup \{n\})|\text{ is odd }\text{ and } |I|=n \right\}.
$
      \item When $|S|$ is even and $n \not\in S$,
$
    V_2(S) = \left\{ I\in V_{C_n} \mid |I^- \cap S|\text{ is odd }\text{ and } |I|=n \right\}.
$
      \item When $|S|$ is odd and $n \in S$,
$
    V_2(S) = \left\{ I\in V_{C_n} \mid |I^- \cap S|\text{ is even }\text{ and } |I|=n \right\}.
$
      \item When $|S|$ is even and $n \in S$,
$
    V_2(S) = \left\{ I\in V_{C_n} \mid |I^- \cap (S \setminus \{n\})|\text{ is even }\text{ and } |I|=n \right\}.
$
    \end{itemize}
    Moreover, $(K_{C_n})_S$ can be regarded as the poset complex of the poset on $V(S)=V_1(S) \cup V_2(S)$
    ordered by inclusion.

\begin{proposition} \label{prop:trivial_case_type_c}
    We have
    $$
        (K_{C_n})_{\emptyset} \simeq \emptyset \quad \text{ and } \quad (K_{C_n})_{\{n\}} \simeq \bigvee^{2^{n}-1} S^0.
    $$
\end{proposition}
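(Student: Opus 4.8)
The plan is to determine the induced subcomplex $(K_{C_n})_S$ in each of the two cases by first computing its vertex set $V(S)$ directly from the characteristic map \eqref{eq:char}, and then reading off the homotopy type from the inclusion poset. The single observation driving both computations is that the $n$th coordinate of $\lambda^\R_{C_n}(I)$ equals $1$ precisely when $|I|=n$ and equals $0$ whenever $|I|<n$: this is immediate from the four lines of \eqref{eq:char}, since the $\e_n$ summand occurs exactly in the two bottom cases. Equivalently, the $n$th row of $\Lambda_{C_n}$ is the indicator vector of $\{I\in V_{C_n}\mid |I|=n\}$. Because $S=\emptyset$ and $S=\{n\}$ are the base cases in which the homotopy type is most transparent directly from \eqref{eq:char}, I would compute $V(S)$ from the characteristic map rather than by specializing the descriptions of $V_1(S)$ and $V_2(S)$.

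For $S=\emptyset$, the subset $\emptyset\subset[n]$ is sent to the empty sum of rows, that is, to the zero element of $\row(\Lambda_{C_n})$; its support is empty, so $V(\emptyset)=\emptyset$ and $(K_{C_n})_\emptyset$ is the void complex. Hence $(K_{C_n})_\emptyset\simeq\emptyset$, and by Theorem~\ref{thm:cohomology-of-real-toric} the corresponding summand $\widetilde{H}^{-1}(\emptyset;\Q)\cong\Q$ produces the single generator of $H^0(X^\R_{C_n};\Q)$.

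For $S=\{n\}$, the associated element of $\row(\Lambda_{C_n})$ is the $n$th row itself, whose support was identified above, so $V(\{n\})=\{I\in V_{C_n}\mid |I|=n\}$. Since $I^+\cap I^-=\emptyset$, a label $I$ with $|I|=n$ must contain exactly one of $j$ or $-j$ for every $j\in[n]$, giving precisely $2^n$ such vertices. Now $(K_{C_n})_{\{n\}}$ is the poset complex of $V(\{n\})$ ordered by inclusion. Because all these labels have the same cardinality $n$, no two of them are comparable, and a simplex of $K_{C_n}$ is a chain $I_1\subsetneq\cdots\subsetneq I_k$, so it can contain at most one of them. Therefore the induced subcomplex is a discrete set of $2^n$ isolated vertices, which is homotopy equivalent to $\bigvee^{2^n-1}S^0$.

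The bulk of the argument is bookkeeping, and the one step with genuine content, which I would treat with care, is this antichain observation: that the $n$th row selects exactly the top-size labels $\{I\mid |I|=n\}$, that there are $2^n$ of them, and that equal cardinality forces incomparability so that no edges or higher faces survive in the induced subcomplex. The only remaining subtlety is conventional, namely confirming that $S=\emptyset$ yields the void complex—so that its reduced $(-1)$st cohomology contributes in degree $0$—rather than a one-point or contractible complex.
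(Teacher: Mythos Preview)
Your proof is correct and follows essentially the same approach as the paper's: both observe from \eqref{eq:char} that the $\e_n$-term appears in $\lambda^\R_{C_n}(I)$ if and only if $|I|=n$, so $(K_{C_n})_{\{n\}}$ consists of $2^n$ isolated points. Your write-up is more explicit than the paper's (spelling out the count $2^n$ and the antichain argument), but the content is identical.
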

\begin{proof}
    The former statement is obvious from the definition.
    Note that, by \eqref{eq:char}, $\lambda^\R_{C_n}(I)$ has the $\e_n$-term if and only if $I$ has $n$ elements.
    Therefore, $(K_{C_n})_{\{n\}}$ consists of $2^n$ distinct points, which proves the latter statement.
\end{proof}

\begin{lemma} \label{lemma:(C_n)S_cong_(C_n)Sn}
    Let $n \ge 3$. For a nonempty subset $S\subset [n-1]$, $(K_{C_n})_S$ is isomorphic to $(K_{C_n})_{S\cup \{n\}}$.
\end{lemma}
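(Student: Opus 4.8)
My plan is to produce the isomorphism from a single sign-change involution of the whole vertex set $V_{C_n}$, exploiting the fact that $(K_{C_n})_S$ is the poset complex of $(V(S),\subseteq)$: it then suffices to exhibit a poset isomorphism $V(S)\xrightarrow{\sim}V(S\cup\{n\})$. For $S\subseteq[n]$ and $w\in\ZZ^n$ set $S\cdot w:=\sum_{i\in S}w_i\in\ZZ$, so that under the identification of $\row(\Lambda_{C_n})$ with $2^{[n]}$ by row sums one has $I\in(K_{C_n})_S$ if and only if $S\cdot\lambda^\R_{C_n}(I)=1$. Two facts are read off \eqref{eq:char}. First, the $\e_n$-coordinate of $\lambda^\R_{C_n}(I)$ is $0$ when $|I|<n$ and $1$ when $|I|=n$ (the observation already used in Proposition~\ref{prop:trivial_case_type_c}). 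Since $(S\cup\{n\})\cdot w=S\cdot w+w_n$, adding $n$ to $S$ leaves membership unchanged on the vertices with $|I|<n$ and reverses it on those with $|I|=n$; that is, $V_1(S)=V_1(S\cup\{n\})$, while $V_2(S\cup\{n\})=\{I\in V_{C_n}\mid |I|=n\}\setminus V_2(S)$.

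Now fix any $s_0\in S$; as $S\subseteq[n-1]$ is nonempty, $s_0\neq n$. Let $\phi\colon V_{C_n}\to V_{C_n}$ be the involution flipping the sign of the coordinate $s_0$: it exchanges $s_0\in I$ with $-s_0$, exchanges $-s_0\in I$ with $s_0$, and fixes $I$ when $s_0\notin I^\pm$. I would first check that $\phi$ is an automorphism of the poset $(V_{C_n},\subseteq)$: flipping one sign preserves the constraint $I^+\cap I^-=\emptyset$ and every inclusion $I'\subseteq I$, since the only coordinate touched is $s_0$ and it is toggled compatibly in $I'$ and $I$ (a short case check on whether $s_0\in I'^\pm$). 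Because $\phi$ leaves $I^\pm$ unchanged and, by the first two lines of \eqref{eq:char}, $\lambda^\R_{C_n}(I)$ depends only on $I^\pm$ when $|I|<n$, we get $\lambda^\R_{C_n}(\phi I)=\lambda^\R_{C_n}(I)$ for every $I$ with $|I|<n$. Hence $\phi$ preserves the value $S\cdot\lambda^\R_{C_n}(I)$ on these vertices and restricts to a bijection of $V_1(S)=V_1(S\cup\{n\})$.

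It then remains to see that $\phi$ carries $V_2(S)$ onto $V_2(S\cup\{n\})$. For $|I|=n$ the last two lines of \eqref{eq:char} give $\lambda^\R_{C_n}(I)=\e_n+\sum_{k\in I^-}\e_k$ or $\e_n+\sum_{k\in I^+}\e_k$, and flipping the sign of $s_0\,(\neq n)$ keeps the same case but toggles whether $\e_{s_0}$ occurs in the sum, while the $\e_n$-term is untouched. As $s_0\in S$, this toggles exactly one term of the count, so $S\cdot\lambda^\R_{C_n}(\phi I)=S\cdot\lambda^\R_{C_n}(I)+1$; i.e. $\phi$ reverses membership on every vertex with $|I|=n$. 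With the complementarity noted above this gives $\phi(V_2(S))=V_2(S\cup\{n\})$. Therefore $\phi$ restricts to a bijection $V(S)\to V(S\cup\{n\})$ which, being the restriction of a poset automorphism, is an isomorphism of posets; taking poset complexes yields $(K_{C_n})_S\cong(K_{C_n})_{S\cup\{n\}}$.

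The only genuinely computational point, and the step I expect to be the main obstacle, is this last parity check: one must confirm, uniformly across the two cases $n\in I$ and $-n\in I$ of \eqref{eq:char}, that toggling a single coordinate $s_0\in S\subseteq[n-1]$ changes the mod-$2$ pairing $S\cdot\lambda^\R_{C_n}(I)$ by exactly $1$. Everything else is formal: that $\phi$ is a poset automorphism, and that it fixes $\lambda^\R_{C_n}$ on the vertices with $|I|<n$, are immediate from \eqref{eq:char}.
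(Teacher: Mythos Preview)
Your proof is correct and follows essentially the same strategy as the paper: both construct the isomorphism as a single sign-flip involution $I\mapsto I^{-x}$ on $V_{C_n}$ and then check that it carries $V(S)$ to $V(S\cup\{n\})$. The only difference is the choice of $x$: the paper takes $x=n$ when $|S|$ is odd and $x=1$ when $|S|$ is even (so it must split into two cases), whereas you take $x=s_0\in S$ uniformly. Your choice is slightly cleaner, since it avoids the parity case distinction and makes the parity computation on $V_2(S)$ a one-liner via $\lambda^\R_{C_n}(\phi I)=\lambda^\R_{C_n}(I)+\e_{s_0}$.
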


\begin{proof}
    Put $T= S \cup \{n\}$.
    We will construct an explicit isomorphism between $(K_{C_n})_S$ and $(K_{C_n})_{T}$.
    For each $I\in V_{C_n}$ and each $x\in [n]$, let $I^{-x}$ be the set obtained from $I$ by reversing the sign of the elements in $I\cap \{x,-x\}$, that is,
    $$
        I^{-x}=\{-i \mid i\in  I\cap \{x,-x\}  \} \cup \{ i\mid i\in I\setminus I\cap \{x,-x\} \}.
    $$
    Note that if $ I\cap \{x,-x\} =\emptyset$ then $I^{-x}=I$.
    For any $I_1, I_2\subset V_{C_n}$ and $x\in [n]$, it is trivial to see that
    $I_1\subset I_2$ if and only if $I_1^{-x} \subset I_2^{-x}$.
    We define a simplicial involution $\varphi$ of $K_{C_n}$ by
    $$
        \varphi(I) := \left\{
                        \begin{array}{ll}
                          I^{-n}, & \hbox{if $|S|$ is odd;} \\
                          I^{-1}, & \hbox{if $|S|$ is even.}
                        \end{array}
                      \right.
    $$
    We show it restricts to a bijection from $V(S)$ to $V(T)$, which proves the assertion.
    For this, it is sufficient to show that $\varphi(I) \in V(T)$ for each $I \in V(S)$.
    We observe that $|I| = |\varphi(I)|$ and $\varphi(I)^{\pm} = I^\pm$.
    Therefore, if $I \in V_1(S)$, then $\varphi(I) \in V_1(T)$.
    Suppose that  $|I|=n$ and $n\in I$.
    Then $|I^{-}\cap S|$ is odd.
    If $|S|$ is odd, then $-n\in \varphi(I)$ and $|I^{+}\cap S|$ is even, and therefore, so is $|\varphi(I)^{+}\cap S|$ since $\varphi(I)^+ =I^{+}$.
    If $|S|$ is even, then $n\in \varphi(I)$ and  $|I^{-}\cap S|$ is odd, and therefore, $|\varphi(I)^{-}\cap S|$ differs from $|I^{-}\cap S|$ by 1, so it is even.
    The case where $|I|=n$ and $-n\in I$ is similar.
    Thus,
    for $I \in V_2(S)$, $\varphi(I)$ is in $V_2(T)$.
\end{proof}

\begin{lemma}\label{lemma:(C_n)1_cong_(C_n)12}
Let $n\geq 3$.
For any subset $S\subset [n-1]$ of odd cardinality and for any $a\in [n-1]\setminus S$,
$(K_{C_n})_S$ is isomorphic to $(K_{C_n})_{S\cup \{a\}}$.
\end{lemma}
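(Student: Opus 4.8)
The plan is to produce an explicit simplicial isomorphism, in the spirit of Lemma~\ref{lemma:(C_n)S_cong_(C_n)Sn}, but this time induced by a transposition of coordinates rather than a sign reversal. Let $\sigma$ be the involution of $[\pm n]$ that swaps $a\leftrightarrow n$ and $-a\leftrightarrow -n$ while fixing every other element, and let $\psi\colon V_{C_n}\to V_{C_n}$ be the induced map $\psi(I)=\{\sigma(k)\mid k\in I\}$. Since $\sigma$ respects signs, $\psi$ preserves the condition $I^+\cap I^-=\emptyset$, so $\psi$ is a well-defined involution of $V_{C_n}$; it is order-preserving for inclusion and hence a simplicial automorphism of $K_{C_n}$. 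Moreover $\psi$ preserves cardinality, $|\psi(I)|=|I|$, and one has $\psi(I)^\pm=(a\,n)\,I^\pm$ and $\psi(I)^-=(a\,n)\,I^-$, where $(a\,n)$ also denotes the induced transposition of $[n]$. Because $(K_{C_n})_S$ is the poset complex of $V(S)$ under inclusion, it then suffices to check that $\psi$ restricts to an order isomorphism from $V(S)$ onto $V(S\cup\{a\})$.

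The next step is to verify separately that $\psi$ carries $V_1(S)$ onto $V_1(S\cup\{a\})$ and $V_2(S)$ onto $V_2(S\cup\{a\})$, the two pieces being distinguished by $|I|<n$ versus $|I|=n$, both of which $\psi$ preserves. The single combinatorial fact driving both checks is that $\sigma$ fixes $S$ (as $a,n\notin S$) and exchanges $a$ with $n$, so that $(a\,n)(S\cup\{a\})=S\cup\{n\}$. Granting this, for $I\in V_1(S)$ one computes $|\psi(I)^\pm\cap(S\cup\{a\})|=|I^\pm\cap(a\,n)(S\cup\{a\})|=|I^\pm\cap(S\cup\{n\})|$, and likewise $|\psi(I)^-\cap(S\cup\{a\})|=|I^-\cap(S\cup\{n\})|$ for $I\in V_2(S)$; in each case the parity is preserved, so $\psi(I)$ satisfies the defining condition on the $S\cup\{a\}$ side. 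As $\psi$ is an involution, the reverse inclusions are automatic, giving the desired bijection and hence the isomorphism of poset complexes.

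The point demanding care — and the real content of the hypothesis that $|S|$ is odd — is that the inequalities defining $V_1$ and $V_2$ are governed by \eqref{eq:char} and take a form depending on the parity of the selecting set (and on whether it contains $n$), rather than a single uniform formula. Here $S$ has odd cardinality and $S\cup\{a\}$ even cardinality, so the correct conditions to match are that $|I^\pm\cap(S\cup\{n\})|$ (resp.\ $|I^-\cap(S\cup\{n\})|$) is odd on the $S$-side, and that $|I^\pm\cap(S\cup\{a\})|$ (resp.\ $|I^-\cap(S\cup\{a\})|$) is odd on the $S\cup\{a\}$-side; these are precisely the forms aligned by $(a\,n)(S\cup\{a\})=S\cup\{n\}$. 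I expect the main obstacle to be exactly this parity bookkeeping: one must invoke the right case of the vertex-set description on each side, and it is the oddness of $|S|$ that makes the two conditions correspond under the transposition (for even $|S|$ the same map would fail to match them). It is also worth remarking that a sign reversal of the type used in Lemma~\ref{lemma:(C_n)S_cong_(C_n)Sn} cannot work here, since it fixes every support $I^\pm$, whereas $V_1(S)$ and $V_1(S\cup\{a\})$ are supported on different families of subsets of $[n]$; a genuine coordinate transposition is therefore forced.
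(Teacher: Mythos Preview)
Your proof is correct and uses exactly the same map as the paper: the paper also defines $\psi(I)$ as the subset obtained from $I$ by switching $\pm a$ with $\pm n$, notes that this is an involution of $K_{C_n}$, and claims it carries $V(S)$ to $V(T)$. The paper's verification is essentially the sentence ``one can check that $\psi$ maps $V(S)$ to $V(T)$,'' so your explicit parity bookkeeping via $(a\,n)(S\cup\{a\})=S\cup\{n\}$ is a genuine expansion of that step; in particular, you correctly use the description $V_1(T)=\{\,I:|I^\pm\cap T|\text{ odd},\ |I|<n\,\}$ for the even set $T=S\cup\{a\}$ with $n\notin T$ (rather than the $(T\cup\{n\})$-form), which is indeed the condition forced by \eqref{eq:char} in that parity case.
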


\begin{proof}
    Put $T = S \cup \{a\}$, and $|S|=2r-1$ with $r \geq 1$.
    For $I \in V_{C_n}$, we let $\psi(I)$ be the subset of $[\pm n]$ obtained from $I$ by switching $\pm a$ with $\pm n$.
    It is easy to see that $\psi$ defines an involution of $K_{C_n}$.
    Furthermore, one can check that $\psi$ maps $V(S)$ to $V(T)$,
    inducing an isomorphism between $(K_{C_n})_S$ and $(K_{C_n})_{T}$.
\end{proof}

By Lemma~\ref{lemma:(C_n)S_cong_(C_n)Sn}, we may assume $n\not\in S$.
 Also by Lemma~\ref{lemma:(C_n)1_cong_(C_n)12},
 we see $(K_{C_n})_S$ depends only on $\lfloor \frac{|S|+1}{2} \rfloor$ when $n\not\in S$.
From now on, we assume that $S=\{1,2,\ldots,2r-1\}$ with $r\ge 1$ and $2r-1 < n$.
We invoke the famous ``Quillen's Theorem A'' to find a simpler complex which is homotopy equivalent to $(K_{C_n})_S$.

In what follows, we do not distinguish a poset from its order complex by abuse of notation.
Let $K^r_{C_n}$ be a poset on $U \cup W$ ordered by inclusion, where $U = \{ I \in V_1(S) \mid I^\pm \subset S \cup \{n\} \}$ and $W = V_2(S)$.
Note that $K^r_{C_n}$ is a subposet of $(K_{C_n})_{S}$.

\begin{theorem}[Proposition~1.6 of \cite{Quillen1978}] \label{thm:quillen_proposition}
    Let $X$ be a subposet of a poset $Y$.
    If $X_{\le y}$ is contractible for all $y \in Y$, then the inclusion $X\subset Y$ is a homotopy equivalence.
\end{theorem}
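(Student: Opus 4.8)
The plan is to prove the statement directly by induction on the cardinality of $Y\setminus X$, using only elementary simplicial topology rather than the full fiber-theorem machinery behind it. I would first record the reduction that makes the induction run: when $y\in X$ the subposet $X_{\le y}$ has $y$ as a greatest element, so (as an order complex) it is a cone and hence contractible automatically. Thus the hypothesis carries no information about $X$ itself, and its whole force is concentrated on the elements of $Y\setminus X$; this is exactly what the inductive step will consume, one element at a time. Note also that ``contractible'' forces nonemptiness, so $X_{\le y}\neq\emptyset$ for every $y$, a fact I will use below.

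For the inductive step, assuming $X\neq Y$, I would pick a \emph{minimal} element $y_0$ of $Y\setminus X$ and set $Y'=Y\setminus\{y_0\}$. Since $y_0\notin X$ we have $X\subseteq Y'$, and for every $y\in Y'$ the poset $X_{\le y}$ is literally unaffected by deleting $y_0$; hence the inductive hypothesis applies to the pair $(X,Y')$ and gives that $X\hookrightarrow Y'$ is a homotopy equivalence. It then suffices to show that $Y'\hookrightarrow Y$ is a homotopy equivalence, since the composite $X\hookrightarrow Y'\hookrightarrow Y$ is precisely the inclusion under consideration.

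To handle $Y'\hookrightarrow Y$ I would pass to order complexes and write $Y=Y'\cup \St(y_0)$, where the closed star $\St(y_0)$ is the set of all chains containing $y_0$. This closed star is the cone $\{y_0\}*\Lk(y_0)$ with link $\Lk(y_0)=Y_{<y_0}*Y_{>y_0}$, and it meets $Y'$ in exactly $\Lk(y_0)$. The crux of the argument, and the step I expect to carry its weight, is that $\Lk(y_0)$ is contractible: because $y_0$ is minimal in $Y\setminus X$, every element strictly below $y_0$ already lies in $X$, so $Y_{<y_0}=X_{<y_0}=X_{\le y_0}$, which is contractible (and nonempty) by hypothesis. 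A join one of whose factors is nonempty and contractible is itself contractible, so $\Lk(y_0)$ is contractible. Therefore $Y$ is obtained from $Y'$ by gluing the cone $\St(y_0)$ along the contractible subcomplex $\Lk(y_0)$; since $\Lk(y_0)\hookrightarrow\St(y_0)$ is then both a cofibration and a homotopy equivalence (both ends being contractible), its pushout $Y'\hookrightarrow Y$ is again a homotopy equivalence by the gluing lemma. Chaining the two equivalences finishes the inductive step, the base case $X=Y$ being the identity.

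The one genuinely delicate point is the identification $Y_{<y_0}=X_{\le y_0}$: it is precisely here that the minimality of $y_0$ converts the hypothesis on the downward fibers in $X$ into contractibility of a link in $Y$, which is what drives the whole reduction. Everything else---the cone and join computations, contractibility of a join with a contractible factor, and invariance of homotopy type under gluing a cone along a contractible subcomplex---is standard. I would also remark that finiteness of $Y$, which holds for all posets appearing in this paper, guarantees that minimal elements of $Y\setminus X$ exist and that the induction terminates.
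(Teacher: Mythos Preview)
The paper does not prove this statement; it is quoted verbatim as Proposition~1.6 of \cite{Quillen1978} and used as a black box in Lemmas~\ref{lem:removal} and~\ref{lem:removal_D}. There is therefore no in-paper argument to compare against.

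That said, your proof is correct for finite $Y$, which is the only case the paper needs. The identification $Y_{<y_0}=X_{\le y_0}$ via minimality of $y_0$ in $Y\setminus X$ is exactly the right observation, and the gluing-lemma step (attaching a cone along a contractible link) is standard. The only limitation is that your induction on $|Y\setminus X|$ requires finiteness, whereas Quillen's original statement and proof do not; but since every poset appearing in this paper is finite, your version is entirely adequate here.
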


\begin{lemma}\label{lem:removal}
We have a homotopy equivalence $(K_{C_n})_{S}\simeq K^r_{C_n}$.
\end{lemma}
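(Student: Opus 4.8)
The plan is to apply Quillen's Theorem~A (Theorem~\ref{thm:quillen_proposition}) with $X = K^r_{C_n}$ and $Y = (K_{C_n})_S$. Since $K^r_{C_n}$ is a subposet of $(K_{C_n})_S$ by construction, it suffices to verify that for every vertex $y \in V(S) = V_1(S) \cup V_2(S)$ of $(K_{C_n})_S$, the lower set $(K^r_{C_n})_{\le y} = \{ z \in U \cup W \mid z \subseteq y \}$ is contractible. I would organize the verification according to which part of $V(S)$ the element $y$ lies in, and for elements of $U \cup W$ themselves the claim is automatic (the lower set has a maximum, namely $y$, hence is a cone and contractible). The real content is therefore the case $y \in V_1(S) \setminus U$, i.e.\ $y = I$ with $|I| < n$ but $I^\pm \not\subseteq S \cup \{n\}$, together with a check that each $y \in V_2(S) = W$ is already in $K^r_{C_n}$ so its lower set is again a cone.

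For $y = I \in V_1(S)$ I would first describe $(K^r_{C_n})_{\le I}$ explicitly. Its elements are the $J \in U$ with $J \subseteq I$ together with the $J \in W = V_2(S)$ with $J \subseteq I$; but since every element of $W$ has cardinality $n > |I|$, no element of $W$ can sit below $I$, so $(K^r_{C_n})_{\le I}$ consists purely of elements of $U$ below $I$, namely $\{ J \subseteq I \mid J^\pm \subseteq S \cup \{n\},\ |J^\pm \cap (S \cup \{n\})| \text{ odd},\ |J| < n \}$. The strategy is to find, inside this lower set, a distinguished element $J_0$ that is comparable to every other element, so the poset is conically contractible. A natural candidate is obtained by intersecting $I$ with the ``active'' coordinate directions $S \cup \{n\}$: set $J_0 := \{ k \in I \mid |k| \in S \cup \{n\} \}$, the restriction of $I$ to the support $S \cup \{n\}$. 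I would check that $J_0 \subseteq I$, that $J_0^\pm \subseteq S \cup \{n\}$, and that its parity condition $|J_0^\pm \cap (S\cup\{n\})|$ is odd (this follows because $I \in V_1(S)$ means $|I^\pm \cap (S \cup \{n\})|$ is odd, and this quantity is unchanged when passing to $J_0$). Then any $J \in (K^r_{C_n})_{\le I}$ satisfies $J \subseteq I$ and $J^\pm \subseteq S \cup \{n\}$, so $J \subseteq J_0$; thus $J_0$ is the maximum of the lower set, which is consequently a cone on $J_0$ and contractible.

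The delicate point — and the step I expect to be the main obstacle — is confirming that the candidate maximum $J_0$ genuinely lies in $U$, i.e.\ that it still satisfies the oddness parity condition defining $V_1(S)$ after restriction, and in particular that $J_0 \neq \emptyset$ so the lower set is nonempty (an empty lower set is \emph{not} contractible, and would break the hypothesis of Theorem~\ref{thm:quillen_proposition}). Nonemptiness is exactly what oddness buys us: if $|J_0^\pm \cap (S \cup \{n\})|$ is odd then it is at least $1$, so $J_0$ contains at least one element. I would also need the compatibility condition $J_0^+ \cap J_0^- = \emptyset$ (so that $J_0 \in V_{C_n}$), which is inherited from $I$ since $J_0 \subseteq I$ and $I$ satisfies \eqref{eqn:I+capI-}. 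Once these parity and nonemptiness bookkeeping checks are dispatched, the contractibility of every $(K^r_{C_n})_{\le y}$ follows uniformly, and Quillen's Theorem~A yields the asserted homotopy equivalence $(K_{C_n})_S \simeq K^r_{C_n}$.
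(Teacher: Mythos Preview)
Your proposal is correct and follows essentially the same approach as the paper: apply Quillen's Theorem~A and, for each vertex $I$ of $(K_{C_n})_S$, exhibit a unique maximum of $(K^r_{C_n})_{\le I}$ (namely $I$ itself when $|I|=n$, and the restriction $J_0 = \{k\in I : |k|\in S\cup\{n\}\}$ when $|I|<n$), so that the lower set is a cone. Your version is in fact more explicit than the paper's about the parity and nonemptiness checks needed to confirm $J_0\in U$.
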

\begin{proof}
By Theorem~\ref{thm:quillen_proposition},
we only have to check that $(K^r_{C_n})_{\leq I}$ is contractible for each vertex $I$ of $(K_{C_n})_S$.
If $|I|=n$, then $(K^r_{C_n})_{\leq I}$ is a cone whose apex is $I$.
For $|I|<n$, let $J$ be the maximum subset of $I$ satisfying $J^\pm \subset (S\cup \{n\})$.
Then, $J$ is the unique maximum of $(K^r_{C_n})_{\leq I}$, and $(K^r_{C_n})_{\leq I}$ is a cone whose apex is $I$.
\end{proof}

Now we investigate the cohomology of $K^r_{C_n}$.
For simplicity, we just put $\K = K^r_{C_n}$.

Observe that $\K_U\cong \CP_{2r}$.
In particular, by Lemma~\ref{lem:posets},
\begin{equation} \label{eqn:K_U}
\widetilde{H}^\ast(\K_U)\cong \widetilde{H}^\ast(\CP_{2r})\cong \left\{
                                                            \begin{array}{ll}
                                                              \Z^{b_{2r}} & \hbox{($\ast=r-1$)} \\
                                                              0 & \hbox{($\ast\neq r-1$).}
                                                            \end{array}
                                                          \right.
\end{equation}
Since $\K$ is obtained by attaching $r$-dimensional simplices to $\K_U$,
$\widetilde{H}^\ast(\K) \cong \widetilde{H}^\ast(\K_U)\cong 0$ for $\ast<r-1$ and for $\ast>r$.
We compute $\widetilde{H}^r(\K)$ and $\widetilde{H}^{r-1}(\K)$ in the following.
Let $\cL=\{J=(\pm 1,\pm 2, \ldots, \pm (2r-1), \pm n)\mid |J^-|\text{ is odd }\}$.
Then, $|\cL|=2^{2r-1}$.
For $J\in \cL$,
we see $\K_{<J}\simeq \BP_{2r}$ and $\K_{>J}$ is $2^{n-2r}$ points.
Therefore, for $V_J= \left\{I\in V\mid I\subset J \text{ or } I\supset J \right\}$,
we have
\[
\K_{V_J} \simeq \K_{<J} {\star} \K_{>J} \simeq \BP_{2r} {\star} \bigvee^{2^{n-2r}-1} S^0,
\]
where $X \star Y$ denotes the join of two topological space $X$ and $Y$,
and by Lemma~\ref{lem:posets},
\[
\widetilde{H}^\ast(\K_{V_J};\Z) \cong \begin{cases} \Z^{(2^{n-2r}-1)a_{2r}}  & (\ast=r) \\ 0 & (\ast\neq r).
\end{cases}
\]
Observe that $\K=\bigcup_{J\in \cL}\K_{V_J}$.
Since $|J\cap J'|\le 2r-2$ for any two distinct elements $J$ and $J'$ of $\cL$,
the dimension of $\K_{V_J}\cap \bigcup_{J'\in \cL\setminus \{J\}} \K_{V_{J'}}$ is less than $r-1$.
By the Mayer-Vietoris sequence, we have
\[
\widetilde{H}^r(\K)=\widetilde{H}^r\left(\bigcup_{J\in \cL} \K_{V_{J}}\right) \cong
\bigoplus_{J\in \cL}\widetilde{H}^r(\K_{V_J})\cong
\Z^{2^{2r-1} (2^{n-2r}-1) a_{2r}}.
\]

We now show inductively on the cardinality of $\cL' \subset \cL$ that
\[
\widetilde{H}^{r-1}\left( \bigcup_{J\in \cL' } \K_{V_{J}} \cup \K_U \right) \cong \Z^{b_{2r}-|\cL'|a_{2r}}.
\]
The case when $|\cL'|=0$ follows from \eqref{eqn:K_U}.
The cases when $|\cL'|>0$ are proved by the Mayer-Vietoris sequence
\begin{align*}
0\leftarrow &
\widetilde{H}^{r}\left(  \bigcup_{J\in \cL'} \K_{V_{J}} \cup \K_{U} \right) \oplus \widetilde{H}^{r}\left(  \K_{V_{J'}} \right)
\xleftarrow{\cong}
\widetilde{H}^{r}\left( \bigcup_{J\in \cL'} \K_{V_{J}} \cup \K_{U} \cup \K_{V_{J'}}\right) \\
\xleftarrow{0} & \widetilde{H}^{r-1}\left( (\bigcup_{J\in \cL'} \K_{V_{J}} \cup \K_{U}) \cap  \K_{V_{J'}} \right)
\leftarrow\widetilde{H}^{r-1}\left(  \bigcup_{J\in \cL'} \K_{V_{J}} \cup \K_{U} \right) \oplus \widetilde{H}^{r-1}\left(  \K_{V_{J'}} \right) \\
\leftarrow &
\widetilde{H}^{r-1}\left( \bigcup_{J\in \cL'} \K_{V_{J}} \cup \K_{U} \cup \K_{V_{J'}}\right) \leftarrow
\widetilde{H}^{r-2}\left( (\bigcup_{J\in \cL'} \K_{V_{J}} \cup \K_{U} )\cap  \K_{V_{J'}} \right)=0,
\end{align*}
where  $J'\not\in \cL'$ since
$\widetilde{H}^\ast\left( \bigcup_{J\in \cL'} \K_{V_{J}} \cup \K_{U} \cap  \K_{V_{J'}} \right)
=\widetilde{H}^\ast(\K_{< J'})\cong \begin{cases} \Z^{a_{2r}} & (\ast=r-1) \\ 0 & (\ast\neq r-1) \end{cases}$
and $\widetilde{H}^{r-1}\left(  \K_{V_{J'}} \right)\cong 0$.
The first isomorphism follows from the fact that $\dim(\K_{U})=r-1$ and the previous computation.

We conclude that, by Lemma~\ref{lem:removal},
\[
\widetilde{H}^\ast((K_{C_n})_S)\cong \widetilde{H}^\ast(\K)\cong \begin{cases}
  \Z^{ (2^{n-2r}-1) 2^{2r-1} a_{2r}}  & (\ast=r) \\
 \Z^{b_{2r} - 2^{2r-1} a_{2r}} & (\ast=r-1) \\
               0 & \text{(otherwise).}
               \end{cases}
\]

Combining this with Proposition~\ref{prop:trivial_case_type_c} and Theorem~\ref{thm:cohomology-of-real-toric},
we obtain the type $C_n$ part of Theorem~\ref{thm:main}.

\section{Type $D_n$}
    The (real) toric variety associated to the Weyl chamber of type $D_n$ can be obtained similarly to the Type $C_n$ case. Since $\Phi_{D_n} = \Phi_{A_n}$ for $n\leq 3$, we consider the case when $n \geq 4$.
    The root system $\Phi_{D_n}$ of type $D_n$ consists of $2n(n-1)$ roots
    $$
        \quad \pm\varepsilon_i \pm \varepsilon_j ~(1\leq i<j \leq n),
    $$ where $\varepsilon_i$ is the $i$th standard vector of $\R^n = V$.
    Then, the co-weight lattice $\Z \langle \Omega \rangle$ is
    $$
        \Lambda = \left\{ \frac{1}{2} \left( \ell_1 \varepsilon_1 + \cdots + \ell_n \varepsilon_n \right) ~\mid ~ \ell_i \in \Z, \text{ and } \ell_i \equiv \ell_j \text{ (mod $2$) for all $i,j$ } \right\}.
    $$
    Choose a basis $\{\epsilon_i\mid 1\le i \le n\}$ of $\Z \langle \Omega \rangle$
    defined by $\epsilon_i := \varepsilon_i$ for $i=1, \ldots, n-1$ and $\epsilon_n:= \frac{1}{2} \left( \varepsilon_1 + \cdots + \varepsilon_n \right)$.
    Any set of simple roots of type $D_n$ is written as
    $$
         \Delta = \{\mu_1 \varepsilon_{\sigma(1)} - \mu_2\varepsilon_{\sigma(2)}, \mu_2 \varepsilon_{\sigma(2)} - \mu_3\varepsilon_{\sigma(3)}, \ldots, \mu_{n-1} \varepsilon_{\sigma(n-1)}- \mu_n\varepsilon_{\sigma(n)}, \mu_{n-1} \varepsilon_{\sigma(n-1)} + \mu_n\varepsilon_{\sigma(n)} \},
    $$ where $\mu_j = \pm1$ and $\sigma \colon [n] \to [n]$ is a permutation.
    Put $\alpha^{\mu,\sigma}_i := \mu_i \varepsilon_{\sigma(i)} - \mu_{i+1}\varepsilon_{\sigma(i+1)}$ for $i=1, \ldots, n-1$ and
    $\alpha^{\mu,\sigma}_n:=\mu_{n-1} \varepsilon_{\sigma(n-1)} + \mu_n\varepsilon_{\sigma(n)}$.
    For $\alpha \in \Delta$, there exists a unique primitive integral vector $\beta_{\alpha,\Delta}$ such that $(\beta_{\alpha,\Delta}, \alpha') = 0$ for all $\alpha' \in \Delta \setminus \{\alpha\}$ and $(\beta_{\alpha,\Delta}, \alpha)>0$.
     We label the ray corresponding to $\beta_{\alpha,\Delta}$ by the subset $I$ of $[\pm n]$ satisfying \eqref{eqn:I+capI-} as we did for the case of type $C_n$. Then, we have

\begin{center}
\begin{tabular}{|c|c|c|}
  \hline
  $\alpha$ & $\beta_{\alpha,\Delta}$ & $I$ \\ \hline
  $\alpha^{\mu,\sigma}_i$ {\tiny for $1 \leq i \leq n-2$} & $\displaystyle \sum_{k=1}^{i} \mu_k \varepsilon_{\sigma(k)}$ & $\{ \mu_1\sigma(1), \ldots, \mu_i \sigma(i) \}$ \\ \hline
  $\alpha^{\mu,\sigma}_{n-1}$ & $\displaystyle  \left(\sum_{k=1}^{n-1} \mu_k \varepsilon_{\sigma(k)}\right) -\mu_n \varepsilon_{\sigma(n)}$  &
$\{ \mu_1\sigma(1), \ldots, \mu_{n-1} \sigma(n-1), -\mu_n \sigma(n) \}$  \\ \hline
  $\alpha^{\mu,\sigma}_{n}$ & $\displaystyle \sum_{k=1}^{n} \mu_k \varepsilon_{\sigma(k)}$  &  $\{ \mu_1\sigma(1), \ldots, \mu_n \sigma(n) \}$ \\
  \hline
\end{tabular}
\end{center}
    Conversely, for each $I \subset [\pm n]$ satisfying both \eqref{eqn:I+capI-} and $|I|\neq n-1$, one can find $\alpha$ and $\Delta$
     such that $\alpha \in \Delta$ and $\beta_{\alpha,\Delta} = \beta_I$.
    Therefore, we have an identification of $V_{D_n}$ as
    $$
        V_{D_n} = \{ I \subset [\pm n] \mid I^+ \cap I^- = \emptyset \text{ and } |I| \neq n-1 \}.
    $$
    Under this identification, a maximal cone $C_\Delta$ corresponds to $n$ such subsets $I_1 \subsetneq \cdots \subsetneq I_{n-2} \subsetneq I_{n-1} \cap I_n$.
    This gives the complete combinatorial structure of $K_{D_n}$.

    We note that $V_{D_n} \subset V_{C_n}$. By similar computation to the case of Type~$C_n$, one can see that the characteristic map $\lambda_{D_n}$ is given exactly same as \eqref{eq:char}, that is, $\lambda_{D_n}(I) = \lambda_{C_n}(I)$ for $I \in V_{D_n} \subset V_{C_n}$.
    Hence, the characteristic matrix $\Lambda_{D_n}$ is an $n \times (3^n - 1 - n\cdot 2^{n-1})$  matrix with elements in $\ZZ$ obtained from $\Lambda_{C_n}$ by restricting columns to those corresponding to $V_{D_n}$.

From now on, let us consider the topology of $X^\R_{D_n}$ for $n \geq 4$. We identify again $\row (\Lambda_{D_n})$ with the power set of $[n]$ as we did in the case of Type~$C_n$. In order to compute the cohomology of $X^\R_{D_n}$ it is enough to consider $(K_{D_n})_S$ for all $S \subset [n]$ by Theorem~\ref{thm:cohomology-of-real-toric}.
We note the rows of $\Lambda_{D_n}$ are symmetric except for  the $n$th row,
    hence we deal with the case for $n\in S$ and $n \not\in S$ separately.

For $m\geq0$, put
$$
    t_m = (m-2)2^{m-1}+1.
$$

\begin{proposition}\label{prop:trivial_case_type_d}
 For $n \geq 4$,
$$(K_{D_n})_{\emptyset} \simeq \emptyset \quad \text{ and } \quad (K_{D_n})_{\{n\}} \simeq \bigvee^{t_n} S^1.$$
\end{proposition}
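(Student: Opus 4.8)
The first relation holds trivially, since the zero vector of $\row(\Lambda_{D_n})$ has empty support and so $(K_{D_n})_\emptyset$ has no vertices. For the second relation the plan is to identify $(K_{D_n})_{\{n\}}$ with an explicit graph and then read off its homotopy type from its first Betti number.

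First I would pin down the vertex set. Under the identification of $\row(\Lambda_{D_n})$ with subsets of $[n]$, a label $I\in V_{D_n}$ is a vertex of $(K_{D_n})_{\{n\}}$ exactly when the $n$th coordinate of $\lambda^\R_{D_n}(I)=\lambda^\R_{C_n}(I)$ equals $1$. Inspecting the four cases of \eqref{eq:char}, the $\e_n$-summand is present if and only if $|I|=n$: for $|I|\le n-1$ the $n$th coordinate vanishes in both the $n\in I^\pm$ and the $n\notin I^\pm$ subcase, whereas for $|I|=n$ the explicit $\e_n$ term survives because $I^+\cap I^-=\emptyset$ places $n$ in exactly one of $I^+,I^-$. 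Hence the vertices of $(K_{D_n})_{\{n\}}$ are exactly the $2^n$ full-support labels $I$ with $I^\pm=[n]$, which I identify with sign vectors in $\{+,-\}^n$.

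Next I would show $(K_{D_n})_{\{n\}}$ is one-dimensional and determine its edges directly from the chamber description. Every maximal simplex of $K_{D_n}$ is a flag $I_1\subsetneq\cdots\subsetneq I_{n-2}$ capped by the two incomparable size-$n$ labels $I_{n-1}$ and $I_n$ of the table, which agree on $\mu_1\sigma(1),\dots,\mu_{n-1}\sigma(n-1)$ and differ only in the sign of $\sigma(n)$. Thus each chamber contains exactly two full-support vertices, so the induced subcomplex on full-support labels carries no simplex of dimension $\ge 2$; and a pair of such labels spans an edge precisely when it occurs as a top pair $\{I_{n-1},I_n\}$, that is, precisely when the two sign vectors differ in a single coordinate. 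Conversely, any two sign vectors differing in one coordinate $c$ are realized as $I_{n-1},I_n$ by choosing a permutation with $\sigma(n)=c$. Therefore $(K_{D_n})_{\{n\}}$ is exactly the graph of the $n$-dimensional hypercube on $\{+,-\}^n$.

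Finally, this graph is connected with $2^n$ vertices and $n2^{n-1}$ edges, hence homotopy equivalent to a wedge of $n2^{n-1}-2^n+1=(n-2)2^{n-1}+1=t_n$ circles, giving $(K_{D_n})_{\{n\}}\simeq\bigvee^{t_n}S^1$. The step that needs the most care is the edge identification: one must read off from the type-$D_n$ chamber structure both that the two size-$n$ labels in a chamber differ by a single sign flip and that every single sign flip occurs, since this is exactly what forces the hypercube adjacency and hence the count $t_n$.
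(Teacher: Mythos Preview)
Your proof is correct and follows essentially the same approach as the paper: identify the vertex set of $(K_{D_n})_{\{n\}}$ as the labels with $|I|=n$, show the induced subcomplex is a $1$-dimensional graph, and compute its first Betti number from the vertex and edge count. Your explicit identification of this graph with the $n$-cube graph and the explicit verification of connectivity are nice additions, but the underlying argument is the same as the paper's.
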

\begin{proof}
  Since the former statement is obvious from the definition, let us prove the latter one. Note that $(K^\R_{D_n})_{\{n\}} = K_W$, where $W = \{I \in V(K^\R_{D^n}) \mid |I|=n \}$. Recall that $I_1$ and $I_2$ in $W$ are connected if and only if $|I_1 \cap I_2| = n-1$. Hence, each vertex has $n$ edges, and there is no $2$-face in $K_W$. Therefore, $K_W$ is homotopy equivalent to $\bigvee^{t_n} S^1$ as desired.
\end{proof}

By the same argument as in Lemmas~\ref{lemma:(C_n)S_cong_(C_n)Sn} and \ref{lemma:(C_n)1_cong_(C_n)12},
we have the following two lemmas.
\begin{lemma}\label{lemma:(D_n)S_cong_(D_n)Sn}
    For a positive integer $n\ge 4$ and for a nonempty subset $S\subset [n-1]$, $(K_{D_n})_S$ is homotopy equivalent to $(K_{D_n})_{S\cup \{n\}}$.
\end{lemma}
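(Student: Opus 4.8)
The plan is to transfer the proof of Lemma~\ref{lemma:(C_n)S_cong_(C_n)Sn} essentially verbatim, exploiting the fact that $V_{D_n}$ sits inside $V_{C_n}$ as the subset cut out by the single extra condition $|I|\neq n-1$, together with the observation already recorded in the excerpt that the characteristic maps agree, $\lambda_{D_n}(I)=\lambda_{C_n}(I)$ for $I\in V_{D_n}$. Consequently the membership conditions defining the vertex sets $V(S)=V_1(S)\cup V_2(S)$ in the type $C_n$ section apply unchanged here, and $(K_{D_n})_S$ is precisely the full subcomplex of $(K_{C_n})_S$ induced on $V(S)\cap V_{D_n}$. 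Writing $T=S\cup\{n\}$ as before, it therefore suffices to produce a simplicial isomorphism between the induced subcomplexes on $V(S)\cap V_{D_n}$ and $V(T)\cap V_{D_n}$.

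I would reuse the involution $\varphi$ of $K_{C_n}$ from the proof of Lemma~\ref{lemma:(C_n)S_cong_(C_n)Sn}, namely $\varphi(I)=I^{-n}$ when $|S|$ is odd and $\varphi(I)=I^{-1}$ when $|S|$ is even, where $I^{-x}$ reverses the sign of the elements of $I$ lying in $\{x,-x\}$. As established there, $\varphi$ preserves inclusions (so it is a simplicial involution), it satisfies $|\varphi(I)|=|I|$ and $\varphi(I)^\pm=I^\pm$, and it carries $V(S)$ bijectively onto $V(T)$. None of the combinatorial casework in that proof—distinguishing the four shapes of $V_2(S)$ according to the parity of $|S|$ and whether $n\in S$—needs to be redone.

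The one new point, and the only place where anything must be checked, is that $\varphi$ preserves the subset $V_{D_n}\subset V_{C_n}$. This is immediate: reversing the sign of a single coordinate preserves both the disjointness condition $I^+\cap I^-=\emptyset$ and the cardinality $|I|$, hence also the constraint $|I|\neq n-1$ that defines $V_{D_n}$. Therefore $\varphi$ restricts to a bijection
$$
\varphi\colon V(S)\cap V_{D_n}\;\longrightarrow\;V(T)\cap V_{D_n},
$$
which is exactly a bijection between the vertex sets of $(K_{D_n})_S$ and $(K_{D_n})_T$; since $\varphi$ sends chains to chains, it extends to a simplicial isomorphism $(K_{D_n})_S\cong(K_{D_n})_T$, in particular a homotopy equivalence. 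I do not anticipate a genuine obstacle: because $\varphi$ preserves cardinality exactly, the cardinality-$(n-1)$ vertices of $V(S)$ map onto those of $V(T)$ and are simply deleted on both sides at once, so the restriction to $V_{D_n}$ is automatic and the type $C_n$ argument goes through with no further modification.
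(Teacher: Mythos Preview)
Your overall strategy matches the paper's, which simply says the $C_n$ argument transfers; and the involution $\varphi$ you write down does the job. But one intermediate claim is false and, because of it, your stated reason for $\varphi$ being simplicial on $K_{D_n}$ is the wrong one.

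The claim that $(K_{D_n})_S$ is the full subcomplex of $(K_{C_n})_S$ induced on $V(S)\cap V_{D_n}$ is not correct: $K_{D_n}$ is \emph{not} an order complex. A maximal simplex of $K_{D_n}$ has the form $\{I_1,\dots,I_{n-2},I_{n-1},I_n\}$ with $I_1\subsetneq\cdots\subsetneq I_{n-2}\subset I_{n-1}\cap I_n$ and $|I_{n-1}|=|I_n|=n$; in particular $\{I_{n-1},I_n\}$ is an edge of $K_{D_n}$ but is never a simplex of $K_{C_n}$, whose faces are chains. So ``$\varphi$ sends chains to chains'' is not the reason $\varphi$ is a simplicial map of $K_{D_n}$.

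The repair is one sentence. The map $I\mapsto I^{-x}$ is induced by the bijection of $[\pm n]$ swapping $x\leftrightarrow -x$, so it preserves inclusion, intersection, and cardinality of subsets of $[\pm n]$. Since the face structure of $K_{D_n}$ is described entirely in those terms, $\varphi$ is a simplicial involution of $K_{D_n}$ directly. Together with your (correct) verification that $\varphi$ carries $V(S)$ onto $V(T)$ and preserves the extra condition $|I|\neq n-1$, this yields the desired isomorphism $(K_{D_n})_S\cong(K_{D_n})_T$.
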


\begin{lemma}\label{lemma:(D_n)1_cong_(D_n)12}
For a positive integer $n\ge 4$ and for odd subset $S\subset [n-1]$ for any $a\in [n-1]\setminus S$,
$(K_{D_n})_S$ is homotopy equivalent to $(K_{D_n})_{S\cup \{a\}}$.
\end{lemma}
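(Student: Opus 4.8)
The plan is to transport, almost verbatim, the involution argument used for Lemma~\ref{lemma:(C_n)1_cong_(C_n)12}. Write $T = S \cup \{a\}$, and recall that $S \subset [n-1]$ has odd cardinality and $a \in [n-1] \setminus S$, so that $n \notin S$, $n \notin T$, and $|T|$ is even. Exactly as in the type $C_n$ case, define $\psi \colon V_{D_n} \to V_{D_n}$ by letting $\psi(I)$ be the subset of $[\pm n]$ obtained from $I$ by interchanging $\pm a$ with $\pm n$ (that is, replacing $a \mapsto n$, $-a \mapsto -n$, $n \mapsto a$, $-n \mapsto -a$, and fixing all other elements). The first thing to check is that $\psi$ is well defined on $V_{D_n}$, not merely on $V_{C_n}$: since $\psi$ acts on underlying absolute values as the transposition $(a\ n)$ of $[n]$, it preserves $|I|$ and the disjointness condition $I^+ \cap I^- = \emptyset$ of \eqref{eqn:I+capI-}; in particular it preserves the extra type-$D$ constraint $|I| \neq n-1$. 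Hence $\psi$ is a cardinality- and inclusion-preserving involution of $V_{D_n}$, i.e.\ a simplicial involution of $K_{D_n}$.

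It then suffices to show that $\psi$ restricts to a bijection $V(S) \to V(T)$, for then $\psi$ carries the induced subcomplex $(K_{D_n})_S$ isomorphically onto $(K_{D_n})_T$, which in particular yields the asserted homotopy equivalence. Because $\psi$ preserves cardinality, it maps the stratum $V_1(S)$ (where $|I| \leq n-2$) into the candidates for $V_1(T)$ and the stratum $V_2(S)$ (where $|I| = n$) into the candidates for $V_2(T)$, so the two pieces may be treated separately; this is precisely the place where the only difference from type $C_n$ enters, and it costs nothing because $\psi$ commutes with the cardinality filtration.

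The heart of the matter is the parity bookkeeping. Here the key observation is that on absolute values $\psi$ acts as $(a\ n)$, under which $(a\ n)(T) = S \cup \{n\}$ because $a$ and $n$ both lie outside $S$. Consequently, for every $I$ one has $\psi(I)^\pm = (a\ n)(I^\pm)$ and $\psi(I)^- = (a\ n)(I^-)$, whence
\[
|\psi(I)^\pm \cap T| = |I^\pm \cap (S \cup \{n\})| \quad \text{and} \quad |\psi(I)^- \cap T| = |I^- \cap (S \cup \{n\})|.
\]
Thus the parity conditions cutting out $V_1(S)$ and $V_2(S)$ are carried exactly to those cutting out $V_1(T)$ and $V_2(T)$ (recall that $|T|$ is even, so these latter conditions are phrased in terms of $T$ rather than $T \cup \{n\}$), giving $\psi(V(S)) = V(T)$. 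I expect this parity computation --- in particular keeping track of the distinguished, non-symmetric $n$-th coordinate in the two subcases $n \in I$ and $-n \in I$ of the $|I| = n$ stratum --- to be the only delicate point, exactly as it was in the type $C_n$ argument.
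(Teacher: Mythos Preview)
Your proposal is correct and follows exactly the approach indicated in the paper, which merely says that the same argument as in Lemma~\ref{lemma:(C_n)1_cong_(C_n)12} applies. You supply the details the paper omits: that $\psi$ preserves the type-$D$ constraint $|I|\neq n-1$ (hence is a simplicial involution of $K_{D_n}$), and the parity bookkeeping showing $\psi(V(S))=V(T)$, including your correct observation that for $T$ with $|T|$ even and $n\notin T$ the $V_1$ and $V_2$ conditions are phrased in terms of $T$ rather than $T\cup\{n\}$.
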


By Lemmas~\ref{lemma:(D_n)S_cong_(D_n)Sn} and \ref{lemma:(D_n)1_cong_(D_n)12},
we may assume that
$S=\{1,2,\ldots,2r-1\}$ with $r\ge 1$ and $2r-1 < n$.

We note that each maximal simplex of $(K_{D_n})_{S}$ contains exactly one pair of elements $I_1$ and $I_2$
such that $|I_1\cap I_2|=n-1$.
Let $V(S)$ be the vertex set of $(K_{D_n})_{S}$, and
$V' = \{ J \in [\pm n] \mid J = I_1 \cap I_2 \text{ for some $I_1, I_2 \in V(S)$, and }|J|=n-1 \}$.
Similarly to Lemma~\ref{lem:removal}, we have the following lemma.

\begin{lemma}\label{lem:removal_D}
We have a homotopy equivalence $(K_{D_n})_{S}\simeq K^r_{D_n}$, where
$K^r_{D_n}$ is the poset on $U \cup W \cup V'$ ordered by inclusion, where
$$
U= \{ I \in V(S) \mid I^\pm \subset S \cup \{n\} \text{ and } |I|<n-1\}, \quad \text{ and } \quad
W= \{ I \in V(S) \mid
 |I|= n \}.
$$
\end{lemma}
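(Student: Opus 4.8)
The plan is to imitate the proof of Lemma~\ref{lem:removal}: realize $K^r_{D_n}$ as a subposet of a poset whose order complex models $(K_{D_n})_S$, and then apply Quillen's Theorem~A (Theorem~\ref{thm:quillen_proposition}). The essential new difficulty in type $D_n$ is that the elements of $V'$ have cardinality $n-1$ and hence are \emph{not} vertices of $(K_{D_n})_S$, since $V_{D_n}$ contains no set of size $n-1$. Thus $K^r_{D_n}$ is not literally a subposet of the face poset of $(K_{D_n})_S$, and the first job is to bring $V'$ into the picture.

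First I would use the structural fact recorded above that each maximal simplex of $(K_{D_n})_S$ contains a unique pair $I_1, I_2 \in W$ with $|I_1 \cap I_2| = n-1$, and that, by the adjacency rule for $W$ (Proposition~\ref{prop:trivial_case_type_d}), these are the only incomparable pairs that occur inside a simplex. Performing the stellar subdivision that inserts the midpoint $I_1 \cap I_2 \in V'$ into each such top edge then turns $(K_{D_n})_S$ into the order complex of the poset $P := (V(S) \cup V', \subseteq)$: a maximal simplex $\{I'_1 \subsetneq \cdots \subsetneq I'_{n-2}\} \cup \{I_1, I_2\}$ is replaced by the two chains ending in $\cdots \subsetneq (I_1 \cap I_2) \subsetneq I_1$ and $\cdots \subsetneq (I_1 \cap I_2) \subsetneq I_2$. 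The bijection between $V'$ and these top edges (a set $J \in V'$ omits a unique index $k \in [n]$ and recovers its edge as $\{J \cup \{k\}, J \cup \{-k\}\}$) shows the subdivision is globally well defined, so $(K_{D_n})_S$ is homeomorphic to $P$ and in particular $(K_{D_n})_S \simeq P$.

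Next I would apply Theorem~\ref{thm:quillen_proposition} to the inclusion $K^r_{D_n} = (U \cup W \cup V', \subseteq) \subseteq P$, verifying that $(K^r_{D_n})_{\le y}$ is contractible for every $y \in P$. If $y \in W \cup V'$, then $y$ itself lies in $K^r_{D_n}$ and is the maximum of $(K^r_{D_n})_{\le y}$, which is therefore a cone. If $y \in V(S)$ with $|y| \le n-2$, then no set of size $n-1$ or $n$ lies below $y$, so $(K^r_{D_n})_{\le y} = \{K \in U \mid K \subseteq y\}$; this has the maximum $J_0 := \{i \in y \mid |i| \in S \cup \{n\}\}$, and $J_0 \in U$ because $J_0^\pm = y^\pm \cap (S \cup \{n\})$ makes $J_0$ satisfy the same parity condition defining $V(S)$ as $y$, while $|J_0| \le n-2$. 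Since $V(S)$ contains no set of size $n-1$, these cases are exhaustive, so every down-set is contractible and the inclusion is a homotopy equivalence; combining this with the subdivision of the first step gives $(K_{D_n})_S \simeq P \simeq K^r_{D_n}$.

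The main obstacle is the first step. In type $C_n$ the complex $K^r_{C_n}$ already sits inside $(K_{C_n})_S$, so Quillen applied directly; here one must instead justify that inserting the midpoints of the top edges converts the flag complex $(K_{D_n})_S$ into the genuine simplicial order complex of $P$. This rests on the uniqueness of the top pair in each maximal simplex and on the absence of three mutually adjacent size-$n$ vertices, which is precisely what guarantees that a single midpoint per maximal simplex suffices and that the subdivided complex really is an order complex. Once this is established, the Quillen step is a routine transcription of the proof of Lemma~\ref{lem:removal}, the only addition being the (immediate) contractibility of the down-sets at the new vertices of $V'$.
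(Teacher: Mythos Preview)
Your proposal is correct and follows essentially the same approach as the paper: first subdivide each top edge $\{I_1,I_2\}$ with $|I_1\cap I_2|=n-1$ at a new vertex labelled $I_1\cap I_2$ to realise $(K_{D_n})_S$ as the order complex of $P=(V(S)\cup V',\subseteq)$, then apply Theorem~\ref{thm:quillen_proposition} to the inclusion $K^r_{D_n}\subseteq P$. Your account is in fact more careful than the paper's, which asserts the subdivision step in one sentence and omits the case analysis for the Quillen verification.
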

\begin{proof}
We adjoin new vertices labelled with $I_1 \cap I_2$ on the edge between $I_1$ and $I_2$,
then take the subdivision of the simplex.
Then, $(K_{D_n})_{S}$ is homeomorphic to the poset complex of the vertex set $V(S) \cup V'$, where the poset structure is given by the inclusion of vertex labels.
    Then, this poset complex is indeed homotopy equivalent to $K^r_{D_n}$ by     Theorem~\ref{thm:quillen_proposition}.
\end{proof}

Now we investigate the cohomology of $K^r_{D_n}$. We put $\Kd = K^r_{D_n}$ for simplicity.
We first consider the case when $n=2r$ and $2r+1$, where $t_{n-2r}=0$.
In this case, $\Kd$ is exactly same as in the type $C_n$ case.
Thus, we have
\[
\widetilde{H}^\ast(\Kd)\cong \begin{cases}
 \Z^{b_{2r} - 2^{2r-1} a_{2r}} & (\ast=r-1) \\
               0 & \text{(otherwise).}
               \end{cases}
\]
Now we proceed to the case when $n>2r+1$.
One can see $K_U\simeq \CP_{2r}$.
Since $\Kd$ is obtained by attaching $r+1$-dimensional simplices to $\K_U$,
 by Lemma~\ref{lem:posets} we have $\widetilde{H}^\ast(\K) \cong \widetilde{H}^\ast(\K_U)\cong 0$ for $\ast<r-1$ and for $\ast>r+1$.
Let $\cL=\{J=(\pm 1,\pm 2, \ldots, \pm (2r-1), \pm n)\mid |J^-|\text{ is odd }\}$.
Then, $|\cL|=2^{2r-1}$.
For $J\in \cL$,
we see $\K_{<J}\simeq \BP_{2r}$.
Therefore, for $V_J= \left\{I\in V\mid I\subset J \text{ or } I\supset J \right\}$,
we have $\Kd_{V_J}\simeq \Kd_{< J}\star \Kd_{>J}\simeq \BP_{2r} \star \Kd_{>J}$ for $J\in \cL$,
and $\Kd=\bigcup_{J\in \cL}\Kd_{V_J}$.
Furthermore, $\Kd_{>J}$ has $(n-2r)2^{n-1-2r}$ vertices in $V'$
and $2^{n-2r}$ vertices in $W$.
Since each vertex in $V'$ is adjacent to exactly two edges,
the first Betti number of $\Kd_{>J}$ is $2(n-2r)2^{n-1-2r}-((n-2r)2^{n-1-2r}+2^{n-2r})+1=t_{n-2r}$.
In summary, we have $\Kd_{>J}\simeq \bigvee^{t_{n-2r}} S^1$ and
\[
\widetilde{H}^\ast(\Kd_{V_J})\cong\begin{cases} \Z^{t_{n-2r}a_{2r}}  & (\ast=r+1) \\
 0 & (\text{otherwise}).
\end{cases}
\]

We now show inductively on the cardinality of $\cL' \subset \cL$ that
\[
\widetilde{H}^{\ast}\left( \bigcup_{J\in \cL'} \Kd_{V_{J}} \cup \Kd_{U} \right) \cong
\begin{cases}
\Z^{|\cL'|t_{n-2r}a_{2r}} & (\ast=r+1) \\
\Z^{b_{2r}-|\cL'|a_{2r}} & (\ast=r-1) \\
0 & (\text{otherwise}).
 \end{cases}
\]
This follows from the Mayer-Vietoris sequence
{\scriptsize\begin{align*}
0 &=\widetilde{H}^{r}\left( (\bigcup_{J\in \cL'} \Kd_{V_{J}} \cup \Kd_{U}) \cap  \Kd_{V_{J'}} \right)
&\leftarrow
\widetilde{H}^{r+1}\left(  \bigcup_{J\in \cL'} \Kd_{V_{J}} \cup \Kd_{U} \right) \oplus \widetilde{H}^{r+1}\left(  \Kd_{V_{J'}} \right)
&\leftarrow
\widetilde{H}^{r+1}\left( \bigcup_{J\in \cL'} \Kd_{V_{J}} \cup \Kd_{U} \cup \Kd_{V_{J'}}\right) \\
&\leftarrow \widetilde{H}^{r}\left(( \bigcup_{J\in \cL'} \Kd_{V_{J}} \cup \Kd_{U}) \cap  \Kd_{V_{J'}} \right)
&\leftarrow
\widetilde{H}^{r}\left(  \bigcup_{J\in \cL'} \Kd_{V_{J}} \cup \Kd_{U} \right) \oplus \widetilde{H}^{r}\left(  \Kd_{V_{J'}} \right)
&\leftarrow
\widetilde{H}^{r}\left( \bigcup_{J\in \cL'} \Kd_{V_{J}} \cup \Kd_{U} \cup \Kd_{V_{J'}}\right) \\
&\leftarrow
\widetilde{H}^{r-1}\left(( \bigcup_{J\in \cL'} \Kd_{V_{J}} \cup \Kd_{U}) \cap  \Kd_{V_{J'}} \right)
&\leftarrow
\widetilde{H}^{r-1}\left(  \bigcup_{J\in \cL'} \Kd_{V_{J}} \cup \Kd_{U} \right) \oplus \widetilde{H}^{r-1}\left(  \Kd_{V_{J'}} \right)
&\leftarrow
\widetilde{H}^{r-1}\left( \bigcup_{J\in \cL'} \Kd_{V_{J}} \cup \Kd_{U} \cup \Kd_{V_{J'}}\right) \\
&\leftarrow \widetilde{H}^{r-2}\left( ( \bigcup_{J\in \cL'} \Kd_{V_{J}} \cup \Kd_{U}) \cap  \Kd_{V_{J'}} \right)=0.
\end{align*}}
where  $J'\not\in \cL'$ since
$\widetilde{H}^{\ast}\left( ( \bigcup_{J\in \cL'} \Kd_{V_{J}} \cup \Kd_{U} ) \cap  \Kd_{V_{J'}} \right)
=\widetilde{H}^{\ast}(\Kd_{< J'})\cong \begin{cases} \Z^{a_{2r}} & (\ast=r-1) \\ 0 & (\ast\neq r-1) \end{cases}$
and the terms in the second row are all trivial.

By Lemma~\ref{lem:removal_D}, we conclude that
\[
\widetilde{H}^\ast((K_{D_n})_S)\cong \widetilde{H}^\ast(\Kd)\cong
\begin{cases}
\Z^{2^{2r-1}t_{n-2r}a_{2r}} & (\ast=r+1) \\
\Z^{b_{2r}-2^{2r-1}a_{2r}} & (\ast=r-1) \\
0 & (\text{otherwise}).
 \end{cases}
\]

Combining this with Proposition~\ref{prop:trivial_case_type_d} and Theorem~\ref{thm:cohomology-of-real-toric}, we obtain the type $D_n$ part of Theorem~\ref{thm:main}.

\section*{Acknowledgements}
The authors are grateful to Professor Boram Park for her invaluable comments on the initial version of this paper.
\bigskip

\bibliographystyle{amsplain}

\end{document}